\documentclass[11pt]{amsart}

\usepackage{graphics}
\usepackage{graphicx}
\usepackage[T1]{fontenc}    

\usepackage{amsthm}
\usepackage{amsbsy,amsmath,amssymb,amscd,amsfonts}
\usepackage{hyperref}
\usepackage{url}            
\usepackage{booktabs}       
\usepackage{nicefrac}       
\usepackage{microtype}      

\usepackage{float,latexsym,color}
\usepackage[font={scriptsize,it}]{caption}
\usepackage{subcaption}
\usepackage{tablefootnote}

\usepackage{makecell}

\usepackage[dvipsnames]{xcolor}

\newtheorem{theorem}{Theorem}
\newtheorem{step}{Step}

\newtheorem{proposition}{Proposition}
\newtheorem{remark}{Remark}

\newtheorem{lemma}{Lemma}
\newtheorem*{lemma*}{Lemma}

\newtheorem{affirmation}{Affirmation}

\def\cp#1{{\mathbb{P}}_{#1}\mathbb{C} }
\hypersetup{
    pdftoolbar=true,        
    pdfmenubar=true,        
    pdffitwindow=false,     
    pdfstartview={FitH},    
    colorlinks=true,       
    linkcolor=OliveGreen,          
    citecolor=blue,        
    filecolor=black,      
    urlcolor=red           
}

\usepackage{enumerate} 
\usepackage[pagewise]{lineno}
\arraycolsep=2pt
\captionsetup{width=120mm}
\usepackage[export]{adjustbox}
\DeclareMathOperator*{\argmin}{arg\,min} 

\title[3-periodics: why so many ellipses?]{Loci of 3-periodics in an Elliptic Billiard:\\Why so many ellipses?}

\author{Ronaldo Garcia}
\address{Ronaldo Garcia\\
Inst. de Matemática e Estatística\\
Univ. Federal de Goiás\\
Goiânia, GO, Brazil}
\email{ragarcia@ufg.br}

\author{Jair Koiller}
\address{Jair Koiller\\
Dept. de Matemática\\
Univ. Federal de Juiz de Fora\\
Juiz de Fora, MG, Brazil}
\email{jairkoiller@gmail.com}

\author{Dan Reznik}
\address{Dan Reznik\\
Data Science Consulting\\
Rio de Janeiro, RJ, Brazil}
\email{dan@dat-sci.com}

\begin{document}

\begin{abstract}
A triangle center such as the incenter, barycenter, etc., is specified by a function thrice- and cyclically applied on sidelengths and/or angles. Consider the 1d family of 3-periodics in the elliptic billiard, and the loci of its triangle centers. Some will sweep ellipses, and others higher-degree algebraic curves. We propose two rigorous methods to prove if the locus of a given center is an ellipse: one based on computer algebra, and another based on an algebro-geometric method. We also prove that if the triangle center function is rational on sidelengths, the locus is algebraic.\\

\noindent \textbf{Keywords}: Poncelet, porism, locus, resultants, algebraic\\

\noindent \textbf{MSC2010} {37-40 \and 51N20 \and 51M04 \and 51-04}

\end{abstract}

\modulolinenumbers[1]

\maketitle

\section{Introduction}
\label{sec:intro}
Classic notable points of a triangle include the incenter, barycenter, orthocenter, and circumcenter, see Appendix~\ref{app:constr} for a refresher. While these are traditionally obtained via geometric constructions, Kimberling has generalized this to {\em triangle centers} \cite{kimberling1993_rocky}, specified by a {\em triangle function}, thrice-applied (in cyclical fashion) to the sidelengths and/or angles of a triangle, thus forming a triple of {\em trilinear coordinates}. These are reviewed in Appendix~\ref{app:trilin}.

Thousands of such centers are catalogued in Kimberling's Encyclopedia of Triangle Centers (ETC) \cite{etc}. Centers are labeled $X_i$, e.g., the incenter is $X_1$, the barycenter $X_2$, etc. For each center the corresponding triangle function is provided. A quick perusal reveals these to be either rational, irrational, or more rarely, transcendental, on the sidelengths and/or angles of a triangle.

Consider the 1d family of 3-periodic orbits in the elliptic billiard (EB), Figure~\ref{fig:3-periodics}. The vertices are bisected by ellipse normals and the sides are tangent to a virtual confocal caustic; see Appendix~\ref{app:billiards} for a review. We have been drawn to this family because unexpectedly, the locus of the incenter is an ellipse and that of the Mittenpunkt\footnote{This point was discovered by Nagel in 1836 as the point of concurrence of lines from the excenters through side midpoints \cite{mw}, see Figure~\ref{fig:constructions} (right).} $X_9$ is the billiard center \cite{reznik2020-intelligencer}. Additionally, many other curious invariants have been detected and/or proved \cite{akopyan2020-invariants,bialy2020-invariants,reznik2020-invariants}. 

\begin{figure}
    \centering
    \includegraphics[width=.5\textwidth]{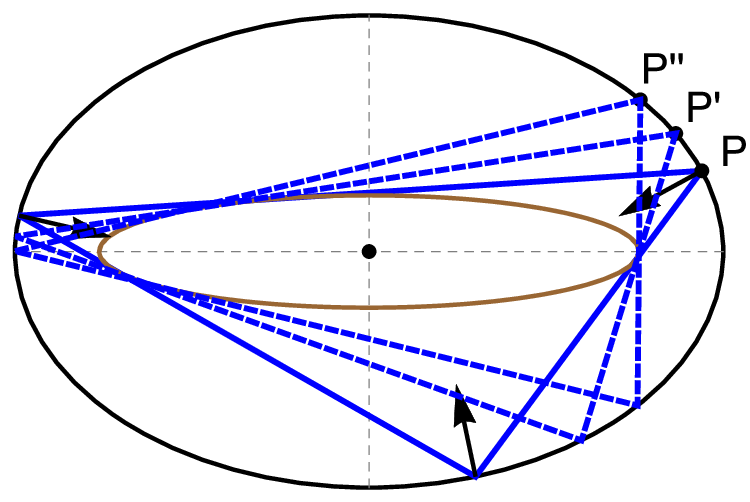}
    \caption{Three 3-periodic orbits (one solid with vertex $P$ and two dashed with vertices $P'$, $P''$). The vertices are bisected by ellipse normals and the sides are dynamically tangent to a virtual confocal caustic (brown). Remarkably, the family conserves perimeter \cite{sergei91}. \href{https://bit.ly/38oncCD}{app}}
    \label{fig:3-periodics}
\end{figure}

In general, triangle centers sweep such curves as ellipses, quartics, sextics, etc., with or without self-intersections, etc.; see Figure~\ref{fig:incenter-loci}. The central question here is: given a triangle center, is it possible to predict its locus curve type over billiard 3-periodics based on the triangle function?

\begin{figure}
    \centering
    \includegraphics[width=\textwidth]{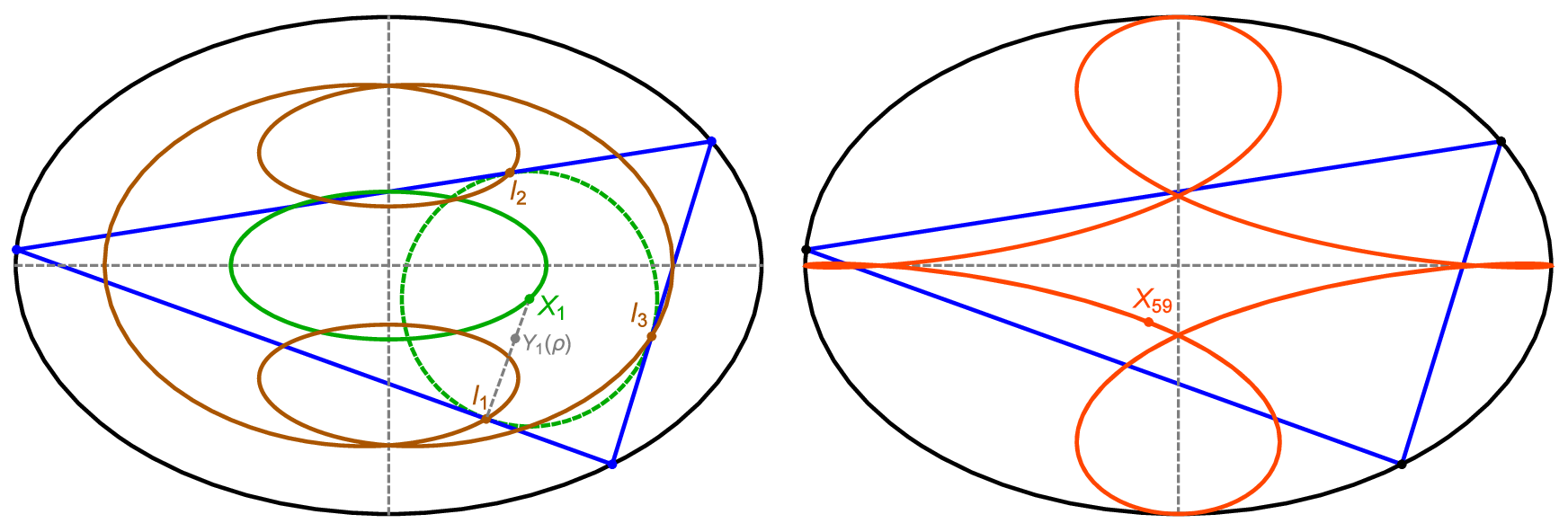}
    \caption{\textbf{Left}: A sample 3-periodic orbit (blue), the elliptic locus of the Incenter $X_1$ (green) and of that of the Intouchpoints $I_1,I_2,I_3$ (brown), the points of contact of the Incircle (dashed green) with the orbit sides. These produce a curve with two internal lobes whose degree is at least 6. $Y_1(\rho)$ is a convex combination of $X_1$ and $I_1$ referred to in Section~\ref{sec:triple-winding}. \href{https://bit.ly/3q4b0Nn}{app},  \href{https://youtu.be/BBsyM7RnswA}{Video 1}, \href{https://youtu.be/9xU6T7hQMzs}{Video 2}. \textbf{Right}: The locus of $X_{59}$ is a curve with four self-intersections. A vertical line epsilon away from the origin intersects the locus at 6 points. \href{https://bit.ly/3i4h6dX}{app}}
    \label{fig:incenter-loci}
\end{figure}

\subsection*{Main Results}

We present a hybrid numerical-CAS (Computer Algebra System) method to rigorously verify if the locus of given triangle center is an ellipse or not. We apply it to the first 100 centers listed in \cite{etc}, finding that 29 are elliptic. We have derived explicit expressions for their axes \cite{garcia2021-ellipses-web} (Theorem~\ref{main}).

We consider the curious case of the Symmedian Point $X_6$, whose locus closely approximates an ellipse but using CAS-based manipulation, we show it is actually a quartic (Theorem~\ref{thm:x6}), which we derive explicitly. Interestingly, its triangle function is rational on the sidelengths and is one of the simplest on the whole of \cite{etc}; see Table~\ref{tab:center-trilinears} in Appendix~\ref{app:constr}.

Using algebro-geometric techniques, we prove (Theorem~\ref{thm:rational-center}) that when the trilinear coordinates (defined in Appendix~\ref{app:trilin}) of a triangle center are rational on the orbit's sidelengths, the locus is an algebraic curve (elliptic or otherwise). We also describe an algorithm based on the method of resultants which computes the zero set of a polynomial in two variables corresponding to the locus. After extensive experimentation, we haven't yet found a non-rational triangle function which results in an elliptic locus, so it is likely that the latter requires a rational triangle function.

\subsection*{Related Work}

Odehnal extensively studied loci of triangle centers over the poristic triangle family \cite{odehnal2011-poristic}. The early experimental result that the locus of the Incenter $X_1$ of billiard 3-periodics is an ellipse was subsequently proven \cite{olga14,garcia2019-incenter}. Proofs soon followed for the ellipticity of both $X_2$ \cite{sergei2016-com} and $X_3$ \cite{corentin19,garcia2019-incenter}; see Figures \ref{fig:non-elliptic-vertex} and \ref{fig:x12345-feuer-combo} in Appendix~\ref{app:early}. More recently, a theory for the ellipticity of a locus based on certain properties of a triangle is being developed, see \cite{helman2021-theory}. For a textbook on Poncelet-based phenomena see \cite{garcia2021-impa}.

\subsection*{Outline}

Our main methods appear in Sections~\ref{sec:loci_geom} and \ref{sec:algebraic}. We conclude in Section~\ref{sec:conclusion} with a list of questions and interesting links and videos. The Appendices contain supporting material. Most figures contain clickable links to relevant videos and/or the experiment displayed on our browser-based \href{https://dan-reznik.github.io/ellipse-mounted-loci-p5js}{app} \cite{darlan2020-ellipse-mounted}.

\section{Detecting Elliptic Loci}
\label{sec:loci_geom}
Let the boundary of the EB be given by ($a>b>0$):
\begin{equation}
f(x,y)=\left(\frac{x}{a}\right)^2+\left(\frac{y}{b}\right)^2=1.
\label{eqn:billiard-f}
\end{equation}

We describe a numerically-assisted method\footnote{We started with visual inspection, but this is both laborious and unreliable, some loci (take $X_{30}$ and $X_6$ as examples) are indistinguishable from ellipses to the naked eye.} which proves that the locus of a given triangle center is elliptic or otherwise. We then apply it to the first 100 triangle centers. listed in \cite{etc}. Indeed, a much larger list could be tested.

\subsection{Proof Method}

Our proof method consists of two phases, one numeric, and one symbolic, Figure~\ref{fig:method-pipeline}. It makes use of the following Lemmas, whose proofs appear in Appendix~\ref{app:method-lemmas}:

\begin{lemma}
The locus of a triangle center $X_i$ is symmetric about both EB axes and centered on the latter's origin.
\label{lem:axisymmetric}
\end{lemma}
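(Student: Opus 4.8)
The plan is to exploit the two mirror symmetries of the ellipse together with the equivariance of triangle centers under isometries. Write $\sigma_1(x,y)=(x,-y)$ and $\sigma_2(x,y)=(-x,y)$ for the reflections about the major and minor axes of the EB. Both are linear isometries of the plane that fix the boundary curve $f(x,y)=1$ setwise, since $f$ in \eqref{eqn:billiard-f} depends only on $x^2$ and $y^2$.

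First I would show that each $\sigma_j$ permutes the one-parameter (Poncelet) family of 3-periodic orbits. A billiard trajectory is determined by the reflection law, equal angles with the boundary normal; an isometry that carries the table to itself carries normals to normals and preserves angles, so the $\sigma_j$-image of any closed 3-periodic orbit is again a 3-periodic orbit inscribed in the \emph{same} EB. Hence, writing $\{T(t)\}$ for the family, for every $t$ there is a parameter $t'$ with $T(t')=\sigma_j(T(t))$; i.e. $\sigma_j$ maps the family onto itself as a set.

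Next I would establish the center equivariance $X_i(\sigma_j(T))=\sigma_j(X_i(T))$. The cleanest route uses the Cartesian conversion \eqref{eqn:trilin-cartesian}, which expresses $X_i$ as a normalized affine combination $X_i=\sum_k w_k P_k/\sum_k w_k$ with weights $w_k$ built from the trilinears $p,q,r$ and the sidelengths $s_1,s_2,s_3$. An isometry preserves all three sidelengths together with their labeling, $s_k'=s_k$, so the weights are unchanged; and a linear (hence affine) map commutes with normalized affine combinations of points. Applying $\sigma_j$ to the three vertices therefore produces exactly $\sigma_j(X_i)$. The one point requiring care is that a genuine Kimberling center is invariant under orientation-reversing isometries as well, which is guaranteed by the bisymmetry of its center function, so no spurious sign change appears when $\sigma_j$ reverses orientation.

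Finally I would combine the two steps on the locus $\mathcal{L}=\{X_i(T(t)):t\}$, obtaining $\sigma_j(\mathcal{L})=\{\sigma_j(X_i(T(t)))\}=\{X_i(\sigma_j(T(t)))\}=\{X_i(T(t'))\}=\mathcal{L}$, so $\mathcal{L}$ is symmetric about both axes; since $\sigma_1\circ\sigma_2$ is the central inversion $(x,y)\mapsto(-x,-y)$, invariance under both reflections forces invariance under it, i.e. $\mathcal{L}$ is centered at the origin. I expect the main obstacle to be the bookkeeping in the equivariance step, namely verifying that an orientation-reversing reflection leaves the trilinear weights intact (sidelength invariance plus the correct behavior of signed distances); the affine-combination viewpoint is precisely what makes this transparent, and is the step I would develop most carefully.
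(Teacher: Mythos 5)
Your proposal is correct and follows essentially the same route as the paper's proof: the EB's axial symmetries map the 3-periodic family to itself, triangle centers are equivariant under reflections (the paper cites this invariance from its appendix on trilinears, which you verify directly from the conversion formula \eqref{eqn:trilin-cartesian} and bisymmetry of the center function), hence the locus is fixed by both reflections and thus centered at the origin. The paper states this in three terse sentences; your version merely supplies the details it leaves implicit.
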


\begin{proof} 
Given a 3-periodic $T=P_1P_2P_3$. Since the EB is symmetric about its axes, the family will contain the reflection of $T$ about said axes, call these $T'$ and $T''$. Since triangle centers are invariant with respect to reflections, Appendix~\ref{app:triangle-centers}, $X_i'$ and $X_i''$ will be found at similary reflected locations.
\end{proof}

Note that  if the locus is elliptic, the above implies it will be concentric and axis-aligned with the EB.


%

\begin{lemma}
\label{lem:axis-of-symmetry}
Any triangle center $X_i$ of an isosceles triangle is on the axis of symmetry of said triangle.
\end{lemma}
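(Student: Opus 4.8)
The plan is to reduce the statement to the single defining feature of a Triangle Center that was already used in Lemma~\ref{lem:axisymmetric}: invariance under reflections. An isosceles triangle carries a nontrivial reflection symmetry, and a point that is pinned down in a reflection-invariant way by such a triangle has no choice but to lie on the mirror.

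Concretely, suppose $T=P_1P_2P_3$ is isosceles with $s_2=s_3$, so that $P_1$ is the apex and the axis of symmetry $\ell$ is the perpendicular bisector of the base $P_2P_3$ through $P_1$. Let $\sigma$ denote reflection across $\ell$. First I would record that $\sigma$ fixes $P_1$ and swaps $P_2\leftrightarrow P_3$, so $\sigma$ maps the triangle to itself. Next, since Triangle Centers are invariant under reflections (Appendix~\ref{app:triangle-centers}), applying $\sigma$ sends $X_i(T)$ to $X_i(\sigma T)$; but $\sigma T$ is the same triangle $T$, hence $\sigma(X_i(T))=X_i(T)$. Finally, the fixed-point set of the reflection $\sigma$ is exactly its axis $\ell$, so $X_i(T)\in\ell$, which is the claim.

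For a self-contained verification I would instead argue through coordinates. Place $\ell$ along the $y$-axis with $P_1=(0,h)$, $P_2=(-w,0)$, $P_3=(w,0)$, so that $s_2=s_3$ and $s_1=2w$. Writing the center by its trilinears $p:q:r$, the defining bisymmetry of the center function (it is unchanged under swapping the two arguments attached to the equal sides) forces $q=r$. Substituting $s_2=s_3$ and $q=r$ into the Cartesian formula~\eqref{eqn:trilin-cartesian}, the $x$-coordinate of $X_i$ becomes $(-q\,s_2\,w + r\,s_3\,w)/(p\,s_1 + q\,s_2 + r\,s_3)=0$, so $X_i$ lies on the $y$-axis, i.e. on $\ell$.

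The only point requiring care — and the one I would state explicitly — is the assertion that $\sigma T = T$ in the sense relevant to the center. A reflection swaps the labels $P_2$ and $P_3$, so one must know that the center is insensitive to this relabeling; this is precisely the bisymmetry/reflection-invariance property of center functions, and it is the same fact underlying the equality $q=r$ in the coordinate computation. Once that is granted, both arguments are immediate, so the main ``obstacle'' here is conceptual rather than computational.
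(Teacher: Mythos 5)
Your proposal is correct, and your coordinate argument is, up to swapping the roles of the $x$- and $y$-axes, exactly the paper's proof: the paper places the isosceles triangle as $P_1=(x_1,0)$, $P_2=(-x_2,y_2)$, $P_3=(-x_2,-y_2)$, notes $s_2=s_3$, invokes the bisymmetry $h(s_1,s_2,s_3)=h(s_1,s_3,s_2)$ to conclude $q=r$, and then reads off from \eqref{eqn:trilin-cartesian} that the coordinate transverse to the axis vanishes. Your first, fixed-point-of-reflection argument is a more conceptual packaging that the paper does not give: it buys brevity and coordinate-freeness, but, as you yourself flag, its entire content is that the center is insensitive to the relabeling $P_2\leftrightarrow P_3$, which is precisely the bisymmetry of the center function that the coordinate computation uses explicitly (the paper leans on the same invariance-under-reflection principle in its proof of Lemma~\ref{lem:axisymmetric}). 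So the two routes are equivalent in substance, and your explicit identification of that hinge point — that $\sigma T=T$ only as an unlabeled triangle, so well-definedness under relabeling must be invoked — is the right thing to emphasize.
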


\begin{lemma}
\label{lem:center-cover}
A parametric traversal of $P_1$ around the EB boundary triple covers the locus of triangle center $X_i$, elliptic or not.
\end{lemma}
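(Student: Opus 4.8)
The plan is to exhibit an explicit order-three symmetry of the parametrization, induced by the billiard dynamics, under which the Triangle Center is invariant. Parametrize the EB boundary by $P(t)=(a\cos t,\,b\sin t)$, $t\in[0,2\pi)$, and recall from Appendix~\ref{app:billiards} that all 3-periodics inscribe the same confocal caustic, so that by Poncelet's closure theorem every boundary point is a vertex of exactly one 3-periodic orbit. Let $\phi\colon[0,2\pi)\to[0,2\pi)$ be the billiard return map expressed in this parameter, so that starting at $P(t)$ the orbit visits $P(t),\,P(\phi(t)),\,P(\phi^2(t))$ and closes, i.e. $\phi^3=\mathrm{id}$.

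First I would show that the unordered triangle $T(t)=\{P(t),\,P(\phi(t)),\,P(\phi^2(t))\}$ satisfies $T(\phi(t))=T(t)$: advancing the starting vertex by one bounce merely relabels the same three points, since $\phi^3=\mathrm{id}$. Next, because a Triangle Center is a geometric point independent of how the vertices are labeled (the same invariance under vertex permutation that underlies the use of reflection invariance in Lemma~\ref{lem:axisymmetric}), we obtain $X_i(T(\phi(t)))=X_i(T(t))$. Hence the locus map $g(t)=X_i(T(t))$ factors through the quotient of the circle by the order-three action generated by $\phi$.

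I would then upgrade this pointwise invariance to the covering statement. Since every boundary point is 3-periodic, $\phi$ is an orientation-preserving circle homeomorphism with $\phi^3=\mathrm{id}$ and no fixed points; its rotation number is therefore $1/3$ (or $2/3$), and being a finite-order circle homeomorphism it is topologically conjugate to the corresponding rigid rotation. Consequently $[0,2\pi)$ splits into three arcs $J,\,\phi(J),\,\phi^2(J)$ that are cyclically permuted by $\phi$, and $g$ restricted to each arc produces the identical curve. A single traversal of $P_1$ around the EB thus sweeps the locus exactly three times, which is the asserted triple cover.

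The main obstacle will be the careful treatment of the degenerate parameters where the three preimages $t,\,\phi(t),\,\phi^2(t)$ fail to be distinct or where $g$ is locally non-injective, e.g. the (possibly isosceles) orbits left invariant by the symmetry, or parameter values where the center lands on an EB axis. At these finitely many points the naive three-to-one count can drop, so I would phrase the conclusion as an equality of topological degree (winding), valid for generic $t$ and extended by continuity, rather than a literal everywhere-three-to-one map. I would also confirm that $\phi$ is genuinely fixed-point-free, so that the order-three action, and not some larger or smaller period, is what governs the cover.
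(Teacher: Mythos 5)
Your proof is correct, but it takes a genuinely different route from the paper's. Both arguments rest on the same two observations --- the center $X_i$ is a function of the unordered vertex set, and advancing the starting vertex one bounce along its orbit merely relabels that set --- but they diverge in how the three-to-one structure is established. The paper (Appendix~\ref{app:method-lemmas}) is explicit and computational: it exhibits the parameter values $t^*,t^{**}$ of the two isosceles configurations, asserts in Affirmation~\ref{obs:tstar1} that $X_i$ completes one full revolution of its locus as $t$ sweeps $[-t^*,t^*)$ (in four quarter-turns), and in Affirmation~\ref{obs:tstar2} that the three remaining boundary arcs revisit the same 3-periodics, whence the triple winding. You instead work abstractly with the billiard return map $\phi$: from $\phi^3=\mathrm{id}$ and fixed-point-freeness you get rotation number $1/3$ and topological conjugacy to a rigid rotation, which splits the circle into three arcs cyclically permuted by $\phi$, on each of which the locus map $g=X_i\circ T$ traces the identical curve. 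Your route is arguably more rigorous precisely where the paper is weakest --- the quarter-turn Affirmations are stated, not proven --- and it needs no explicit coordinates; the paper's route, in exchange, produces the concrete isosceles endpoints that the rest of its method reuses (they furnish the candidate semi-axes $a_i,b_i$ at $t=0,\pi/2$ in the CAS phase). Two small points: your caveat that $t,\phi(t),\phi^2(t)$ might coincide is vacuous, since $\phi^3=\mathrm{id}$ together with fixed-point-freeness already forces the three points to be distinct for every $t$; and the genuine degeneracy to keep (which the paper also glosses over) is possible non-injectivity of $g$ on a fundamental arc --- e.g.\ self-intersecting loci such as that of $X_{59}$ --- so ``triple cover'' should indeed be read as a statement about winding/degree, exactly as you propose in your final paragraph.
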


See Section~\ref{sec:triple-winding} for more details.

A first phase fits a concentric, axis-aligned ellipse to a fine sampling of the locus of some triangle center $X_i$. A good fit occurs when the error is several orders of magnitude\footnote{Robust fitting of ellipses to a cloud of points is not new \cite{fitzgibbon99-ellipse}. In our case, the only source of error in triangle center coordinates is numerical precision, whose propagation can be bounded by Interval Analysis \cite{moore2009-interval-analysis,snyder92-ellipse}.} less than the sum of axes regressed by the process. False negatives are eliminated by setting the error threshold to numeric precision. False positives can be produced by adding arbitrarily small noise to samples of a perfect ellipse, though this type of misclassification does not survive the next, symbolic phase. 

A second phase attempts to symbolically verify via a Computer Algebra System (CAS) if the parametric locus of the $X_i$ satisfies the equation of a concentric, axis-aligned ellipse. Expressions for its semi-axes are obtained by evaluating $X_i$ at isosceles orbit configurations, Figure~\ref{fig:sideways-upright-orbit}. The method is explained in detail in Figure~\ref{fig:method-detail}.

\begin{figure}
    \centering
    \includegraphics[width=.5\textwidth]{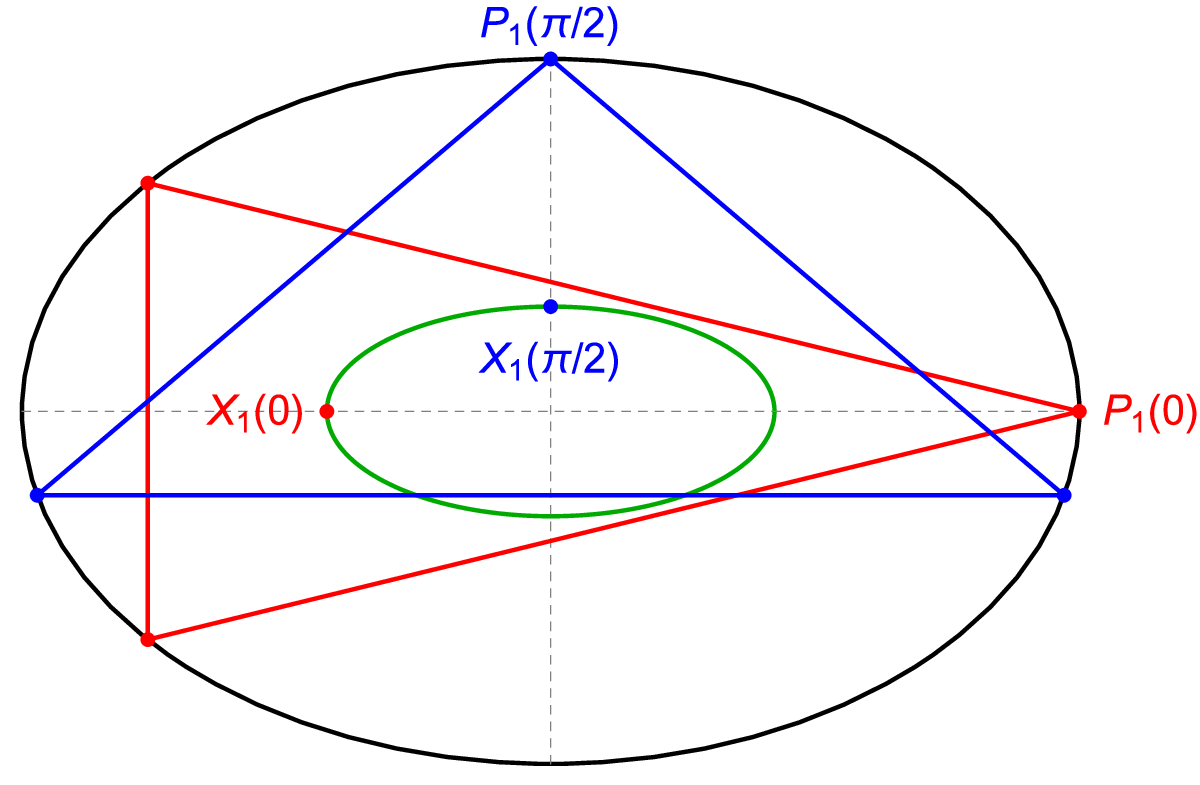}
    \caption{With $P_1$ at the right (resp.~top) vertex of the EB, the orbit is a sideways (resp.~upright) isosceles triangle, solid red (resp.~solid blue). Not shown are their two symmetric reflections. Also shown (green) is the locus of a sample triangle center, $X_1$ in this case. At the isosceles positions, vertices will lie on the axis of symmetry of the triangle, Lemma~\ref{lem:axis-of-symmetry}. When the locus is elliptic, the $x,y$ coordinates of $X_i(0),X_i(\pi/2)$ are the semi-axes $a_i,b_i$, respectively.}
    \label{fig:sideways-upright-orbit}
\end{figure}

\begin{figure}
    \centering
    \includegraphics[width=.9\textwidth]{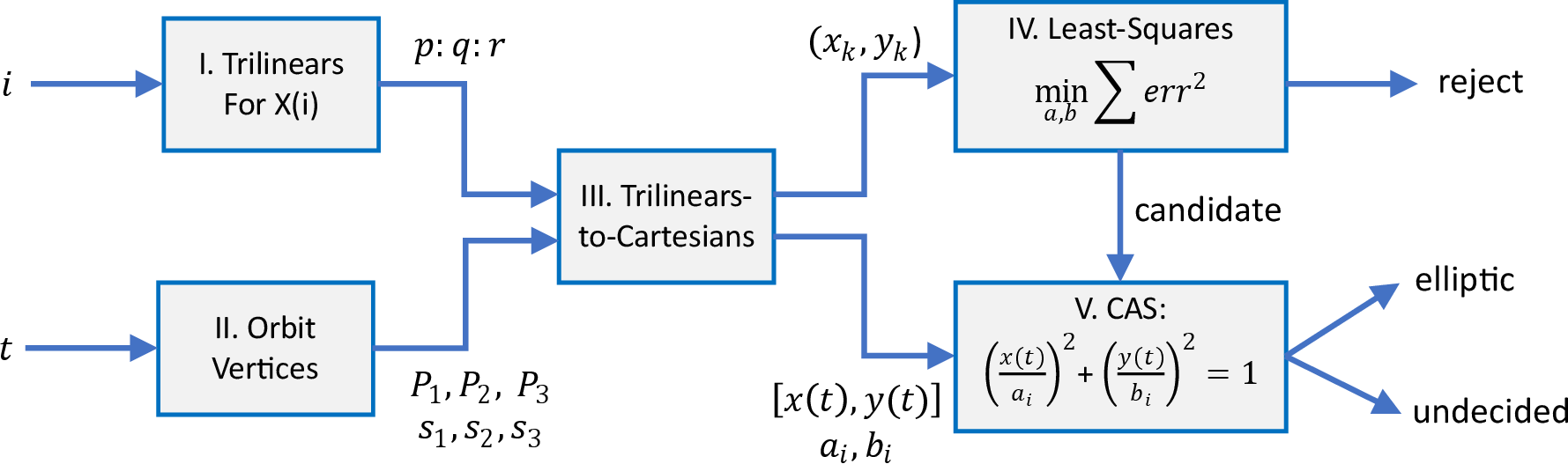}
    \caption{Our method as a flow-chart, modules labeled from I to V: (I) Once the ith center is specified,  its trilinears $p:q:r$ (see Appendix~\ref{app:triangle-centers}) are obtained from the Encyclopedia of triangle centers (ETC) \cite{etc}. (II) Given a symbolic parameter $t$ or a numeric sample $t_k$, obtain cartesian coordinates for orbit vertices and the sidelengths, Appendix~\ref{app:p1p2p3}. (III) Combine trilinears and orbit data to obtain, via \eqref{eqn:trilin-cartesian}, numeric cartesian coordinates for $X_i(t_k)$ or symbolically as $X_i(t)$. (IV) Least-squares fit an axis-aligned, concentric ellipse to the $(x_k,y_k)$ samples and accept $X_i$ as candidate if the fit error is sufficiently small. (V) Verify, via a Computer Algebra System (CAS), if the parametric locus $X_i(t)$ satisfies the equation of an ellipse whose axes $a_i,b_i$ are obtained symbolically in II-III by setting $t=0,\pi/2$. If the CAS is successful, $X_i(t)$ is deemed elliptic, otherwise the result is ``undecided''. The CAS was successful for all 29 candidates selected by IV out of the first 100 Kimberling centers.}
    \label{fig:method-pipeline}
\end{figure}






\begin{figure}
\fbox{
\begin{minipage}{\textwidth}
\footnotesize
\noindent \textbf{1. Select Candidates:}
\begin{itemize}
\item Let the EB have axes $a,b$ such that $a>b>0$. Calculate $P_1(t_k)=\left(a\cos(t_k),b\sin(t_k)\right)$, for $M$ equally-spaced samples $t_k\in[0,2\pi)$, $k=1,2,...,M$.
\item Obtain the cartesian coordinates for the orbit vertices $P_2(t_k)$ and $P_3(t_k)$, $\forall k$ (Appendix~\ref{app:p1p2p3}).
\item Obtain the cartesian coordinates for triangle center $X_i$ from its trilinears \eqref{eqn:trilin-cartesian}, for $\forall{t_k}$. If analyzing the vertex of a derived triangle, convert a row of its {\em trilinear matrix} to cartesian coordinates, Appendix~\ref{app:derived-tris}.
\item Least-squares fit an origin-centered, axis-aligned ellipse (2 parameters) to the $X_i(t_k)$ samples, Lemma~\ref{lem:axisymmetric}. Accept the locus as potentially elliptic if the numeric fit error is negligible, rejecting it otherwise.
\end{itemize}
\noindent \textbf{2. Verify with CAS}
\begin{itemize}
    \item Taking $a,b$ as symbolic variables, calculate $X_i(0)$ (resp.~$X(\pi/2)$), placing $P_1$ at the right (resp.~top) EB vertex. The orbit will be a sideways (resp.~upright) isosceles triangle, Figure~\ref{fig:sideways-upright-orbit}. By Lemma~\ref{lem:axis-of-symmetry}, $X_i$ will fall along the axis of symmetry of either isosceles.
    \item The $x$ coordinate of $X_i(0)$ (resp.~the $y$ of  $X_i(\pi/2)$) will be symbolic expressions in $a,b$. Use them as candidate locus semiaxes' lengths $a_i,b_i$.
    \item Taking $t$ as a symbolic variable, use a computer algebra system (CAS) to verify if $X_i(t)=\left(x_i(t),y_i(t)\right)$, as parametrics on $t$, satisfy $(x_i(t)/a_i)^2+(y_i(t)/b_i)^2=1$, ${\forall}t$.
    \item If the CAS is successful, Lemma~\ref{lem:center-cover} guarantees $X_i(t)$ is will cover the entire ellipse, so assert that the locus of $X_i$ is an ellipse. Else, locus ellipticity is indeterminate. 
\end{itemize}
\end{minipage}}
\caption{Method for detecting ellipticity of a triangle center locus.}
\label{fig:method-detail}
\end{figure}

\subsection{Phase 1: Least-Squares-Based Candidate Selection}

Let the position of $X_i(t)$ be sampled (randomly or uniformly) at $t_k\in[0,2\pi]$, $k=1{\ldots}M$. If the locus is an ellipse, than the latter is concentric and axis-aligned with the EB, Lemma~\ref{lem:axisymmetric}. Express the squared error as the sum of squared sample deviations from an implicit ellipse:
\begin{equation*}
\text{err}^2(a_i,b_i)= \sum_{k=1}^M{\left[\left(\frac{x_k}{a_i}\right)^2+\left(\frac{y_k}{b_i}\right)^2-1\right]^2}
\end{equation*}

\noindent Least-squares can be used to estimate the semi-axes:

$$
(\hat{a_i},\hat{b_i})=\argmin_{a,b}\left\{ \text{err}^2(a_i,b_i)\right\}
$$

The first 100 Kimberling centers separate into two distinct clusters: 29 with negligible least-squares error, and 71 with finite ones. These are shown in ascending order of error in our companion website \cite[Part II]{garcia2021-ellipses-web}.

A gallery of loci generated by $X_1$ to $X_{100}$ (as well as vertices of several derived triangles) is provided in \cite{dsr_locus_gallery_2019}.

\subsection{Phase 2: Symbolic Verification with a CAS}

A CAS was successful in symbolically verifying that all 29 candidates selected in Phase 1 satisfy the equation of an ellipse (none were undecided). As an intermediate step, explicit expressions for their elliptic semi-axes were computed and appear in \cite[Part I]{garcia2021-ellipses-web}.

\begin{theorem} \label{main}
Out of the first 100 centers in \cite{etc}, exactly 29 produce elliptic loci, all of which are concentric and axis-aligned with the EB. These are $X_i$,i=1, 2, 3, 4, 5, 7, 8, 10, 11, 12, 20, 21, 35, 36, 40, 46, 55, 57, 63, 65, 72, 78, 79, 80, 84, 88, 90. Specifically:
\begin{itemize}
\item The loci of $X_i,i=2,7,57,63$ are ellipses homothetic to the EB.
\item The loci of $X_i,i=4,10,40$ are ellipses homothetic to a $90^\circ$-rotated copy of the EB.
\item The loci of $X_i,i=88,100$ are ellipses identical to the EB.
\item The loci of $X_{55}$ is an ellipse homothetic to the $N=3$ caustic.
\item The loci of $X_i,i=3,84$ are ellipses homothetic to a $90^\circ$-rotated copy of the $N=3$ caustic.
\item The locus of $X_{11}$ is an ellipse identical to the $N=3$ caustic.
\end{itemize}
\end{theorem}

The above above are summarized on Table~\ref{tab:ell}. The least-square fit errors for the first 100 Kimberling are shown in Figure~\ref{fig:least-squares-error-graph}. 

\begin{remark}
The loci of $X_i$, $i=1,2,3,4$ are the ellipses $x^2/a_i^2+y^2/b_i^2=1$. Via CAS, their semi-axes $(a_i,b_i)$ are given by:
{\small  
\begin{align*}
    a_1&= \frac{\delta-b^2}{a} & b_1&= \frac{a^2-\delta }{b} &\\
    a_2&=k_2 a & b_2&= k_2 b \\
    a_3&=\frac{a^2-\delta}{2b} & b_3&= \frac{\delta-b^2}{2a} & \\
    a_4&=\frac{k_4}{a} & b_4&= \frac{k_4}{b}
\end{align*}
where $\delta^2=a^4+b^4-a^2b^2$, $k_2=(2\delta-a^2-b^2)/c^2$, $k_4=[(a^2+b^2)\delta-2a^2b^2)]/c^2$, and $c^2=a^2-b^2$.
}
\end{remark}

Explicit expressions for the locus semi-axes for abovementioned centers appear in \cite[Part I]{garcia2021-ellipses-web}. 

\begin{figure}
    \centering
    \includegraphics[width=.7\textwidth]{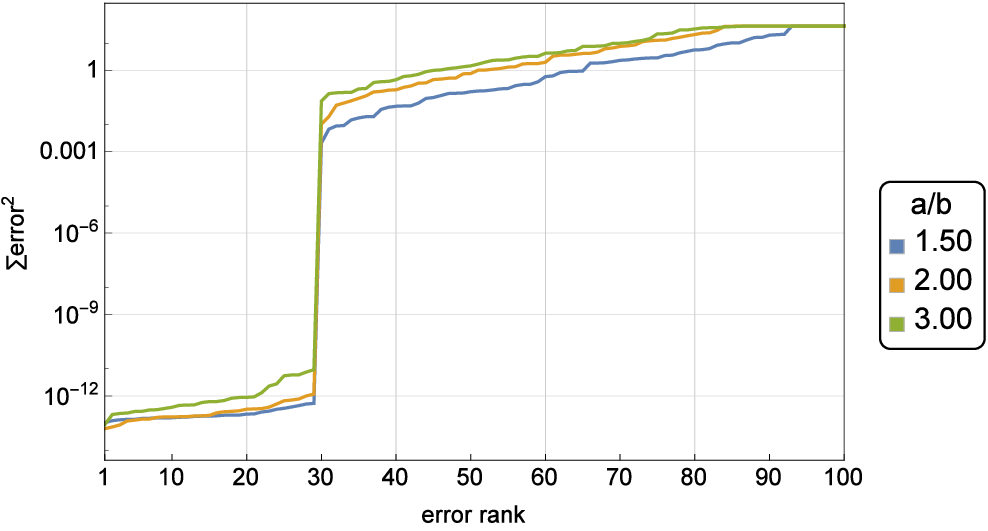}
    \caption{Log of Least-squares error for first 100 Kimberling centers in ascending order of error, for three values of $a/b$, $M=1500$. Elliptic vs. non-elliptic centers are clearly separated in two groups whose errors differ by several orders of magnitude. A table in \cite[Part II]{garcia2021-ellipses-web} shows that $X_{37}$ and $X_6$ are have ranks $30$ and $31$ respectively, i.e., they are the centers whose non-elliptic loci are closest to a perfect ellipse.}
    \label{fig:least-squares-error-graph}
\end{figure}

\begin{table}
\begin{minipage}[t]{.475\linewidth}
$$
\scriptsize
\begin{array}{|c|c|l|c|}
\hline
\text{row} & X_i & \text{definition} & \text{sim} \\
\hline
 1 & {1} & \text{Incenter} & \text{J}^t \\
 2 & {2} & \text{Barycenter} & \text{B} \\
 3 & {3} & \text{Circumcenter} & \text{C}^t  \\
 4 & {4} & \text{Orthocenter} & \text{B}^t \\
 5 & {5} & \text{9-Point Center} &  \\
 6 & {7} & \text{Gergonne Point} & \text{B}\\
 7 & {8} & \text{Nagel Point} &  \\
 8 & {10} & \text{Spieker Center} & \text{B}^t\\
 9 & {11} & \text{Feuerbach Point} & \text{C}^+ \\
 10 & {12} & \text{$\{X_{1,5}\}$-Harm.Conj. of $X_{11}$} & \\
 11 & {20} & \text{de Longchamps Point} & \\
 12 & {21} & \text{Schiffler Point} & \\
 13 & {35} & \text{$\{X_{1,3}\}$-Harm.Conj. of $X_{36}$} & \\
 14 & {36} & \text{Inverse-in-Circumc. of $X_1$} & \\
 15 & {40} & \text{Bevan Point} & \text{B}^t \\
 \hline
\end{array}
$$
\end{minipage}%
\begin{minipage}[t]{.475\linewidth}
$$
\scriptsize
\begin{array}{|c|c|l|c|}
\hline
\text{row} & X_i & \text{definition} & \text{sim} \\
\hline
16 & {46} & \text{$X_4$-Ceva Conj. of $X_1$} & \\
 17 & {55} & \text{Insimilictr(Circumc.,Incir.)} & \text{C}\\
 18 & {56} & \text{Exsimilictr(Circumc.,Incir.)} &  \\
 19 & {57} & \text{Isogonal Conj. of $X_9$} & \text{B} \\
 20 & {63} & \text{Isogonal Conj. of $X_{19}$} & \text{B} \\
 21 & {65} & \text{Intouch Triangle's $X_4$} & \\
 22 & {72} & \text{Isogonal Conj. of $X_{28}$} & \text{J}\\
 23 & {78} & \text{Isogonal Conj. of $X_{34}$} & \\
 24 & {79} & \text{Isogonal Conj. of $X_{35}$} & \\
 25 & {80} & \text{Refl. of $X_1$ about $X_{11}$} &  \text{J}^t \\
 26 & {84} & \text{Isogonal Conj. of $X_{40}$} & C^t \\
 27 & {88} & \text{Isogonal Conj. of $X_{44}$} &  \text{B}^+ \\
 28 & {90} & \text{$X_{3}$-Cross Conj. of $X_{1}$} & \\
 29 & {100} & \text{Anticomplement of $X_{11}$} & \text{B}^+\\
 \hline
 \end{array}
 $$
\end{minipage}
\caption{The 29 Kimberling centers within $X_1$ to $X_{100}$ with elliptic loci. Under column ``sim.'', letters B,C,J indicate the locus is similar to EB, caustic, or Excentral locus, respectively. An additional {+}  (resp. {t}) exponent indicates the locus is identical (resp. similar to a perpendicular copy) to the indicated ellipse. Note: the ellipticity of  $X_i$,$i=1,2,3,4$ was previously proven \cite{olga14,sergei2016proj,corentin19,garcia2019-incenter}.}
\label{tab:ell}
\end{table}

\subsection{The quartic locus of the Symmedian Point}

The construction of the Symmedian point $X_6$ of a triangle is shown in Figure~\ref{fig:x6}.

\begin{figure}
    \centering
    \includegraphics[trim=0 125 0 0,clip,width=.8\textwidth]{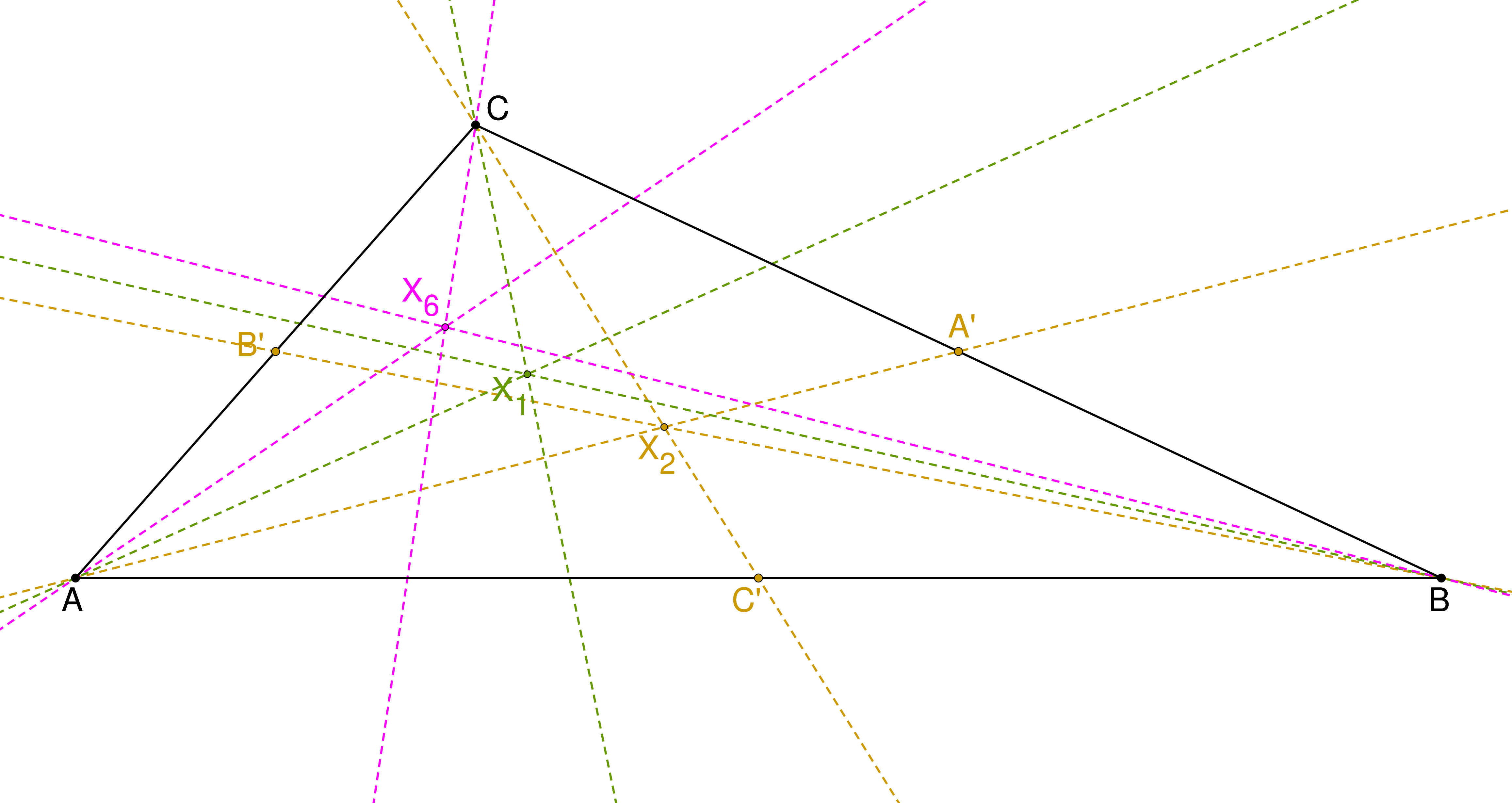}
    \caption{Given a triangle $ABC$, the Symmedian point $X_6$ is the point of concurrence of the three {\em symmedians} (pink), where the latter is a reflection of a median (brown) about the corresponding angular bisector (green). Medians (resp. angular bisectors) concur at the barycenter $X_2$ (resp.~the incenter $X_1$).}
    \label{fig:x6}
\end{figure}

Over 3-periodic orbits in an EB with $a/b=1.5$, the locus of $X_6$ is visually indistinguishable from an ellipse, Figure~\ref{fig:symmedian}. Fortunately, its fit error is 10 orders of magnitude higher than the ones produced by true elliptic loci, see \cite[Part II]{garcia2021-ellipses-web}. So it is easily rejected by the least-squares phase. Indeed, symbolic manipulation with a CAS yields:

\begin{theorem}
\label{thm:x6}
 The locus of $X_6$ is a convex quartic given by:

\begin{equation*}
  \mathcal{X}_6(x,y)=c_1 x^4+c_2 y^4+c_3 x^2 y^2+ c_4 x^2 + c_5 y^2 = 0
\end{equation*}

\noindent where:
$$
\begin{array}{rlrl}
c_1=&b^4(5\delta^2-4(a^2-b^2)\delta -a^2 b^2)&c_2=&a^4(5\delta^2+4(a^2-b^2)\delta-a^2b^2) \\
c_3=&2a^2 b^2(a^2 b^2+3\delta^2)&c_4=&a^2 b^4(3 b^4+2(2 a^2-b^2)\delta-5\delta^2)\\
c_5=&a^4 b^2(3 a^4+2(2 b^2-a^2)\delta-5\delta^2)&\delta=&\sqrt{a^4-a^2 b^2+b^4}
\end{array}
$$
\end{theorem}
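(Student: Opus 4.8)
The plan is to follow exactly the two-phase pipeline the paper has already set up for detecting elliptic loci, but push it one step further to confirm a quartic rather than an ellipse. First I would obtain a rational parametrization $X_6(t)=(x_6(t),y_6(t))$ of the locus. By module (I)--(III) of the method, this means taking the Symmedian Point's Trilinears $p:q:r = s_1:s_2:s_3$ (so barycentrics $s_1^2:s_2^2:s_3^2$), feeding in the explicit orbit vertices $P_1(t),P_2(t),P_3(t)$ and sidelengths $s_1(t),s_2(t),s_3(t)$ from Appendix~\ref{app:p1p2p3}, and applying \eqref{eqn:trilin-cartesian}. Because those vertices are built from square roots of trigonometric quantities, the natural move is to express everything in the Weierstrass-type variable or directly in $\cos t,\sin t$ and clear radicals, so that $x_6(t)$ and $y_6(t)$ become rational functions of a single parameter.

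Next I would determine the candidate coefficients $c_1,\dots,c_5$ and the abbreviation $\delta=\sqrt{a^4-a^2b^2+b^4}$ the way Phase~2 fixes the semi-axes: by evaluating $X_6$ at the distinguished isosceles configurations (Lemma~\ref{lem:axis-of-symmetry}), here $t=0$ and $t=\pi/2$, together with a few additional sample points, to pin down the homogeneous quartic $\mathcal{X}_6$ up to scale. One checks that the locus is symmetric about both axes (Lemma~\ref{lem:axisymmetric}), which forces the general conic/quartic to contain only the even monomials $x^4,y^4,x^2y^2,x^2,y^2$ and a constant; evaluating at the two axis-crossings kills the constant and relates it to the others, leaving the five-term form displayed. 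The quantity $\delta$ appears because it is precisely the combination entering the semi-axes of the confocal caustic, already ubiquitous in the orbit formulas.

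The decisive step is the symbolic verification, i.e. module (V): substitute the rational parametrization into $\mathcal{X}_6(x_6(t),y_6(t))$ and show via a CAS that the result is identically zero for all $t$. Concretely, after clearing denominators this reduces to checking that a single polynomial in $\cos t,\sin t$ (and $a,b,\delta$, with the side relation $\delta^2=a^4-a^2b^2+b^4$ imposed) vanishes identically — a finite trigonometric-polynomial identity, hence decidable. The convexity claim I would establish separately and elementarily: since the locus is a bounded, centrally symmetric quartic meeting each axis in exactly one positive point, convexity follows by confirming the quartic form $\mathcal{X}_6$ has no inflections, e.g. by showing its Hessian (or the curvature of the implicit curve) does not change sign on the locus for $a>b>0$.

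The main obstacle I anticipate is the sheer size of the intermediate expressions rather than any conceptual difficulty: clearing the nested radicals in $P_2(t),P_3(t)$ and the sidelengths produces enormous polynomials, and the final identity $\mathcal{X}_6(x_6(t),y_6(t))\equiv 0$ is only tractable with careful use of the relation $\delta^2=a^4-a^2b^2+b^4$ to reduce powers of $\delta$. As in the elliptic cases, the correctness of the guessed coefficients $c_1,\dots,c_5$ is exactly what the CAS check certifies, so once the parametrization is in rational form the proof is a (heavy but routine) elimination. The Phase~1 least-squares fit, reported to have an error ten orders of magnitude above the true elliptic loci, serves as the prior numerical guarantee that no lower-degree curve (in particular no ellipse) can contain the locus, justifying the search for a genuine quartic.
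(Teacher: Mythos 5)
Your proposal follows essentially the same route as the paper's proof: restrict to a quartic even in both coordinates (by the axis-symmetry lemma), determine its five coefficients up to scale by forcing passage through known points of the locus (the isosceles configurations plus extra samples), and then have a CAS verify that the symbolic parametrization $X_6(t)$ satisfies the resulting polynomial identically. The only difference is that you add an explicit convexity check via the curvature/Hessian of the implicit curve, a point the paper's two-line proof leaves unaddressed; otherwise the two arguments coincide.
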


\begin{proof}
Using a CAS, obtain symbolic expressions for the coefficients of a quartic symmetric about both axes (no odd-degree terms), passing through 5 known-points. Still using a CAS, verify the symbolic parametric for the locus satisfies the quartic.
\end{proof}

 \noindent Note the above is also satisfied by a degenerate level curve $(x,y)=(0,0)$, which we ignore.

\begin{remark}
The axis-aligned ellipse $\mathcal{E}_6$ with semi-axes $a_6,b_6$ is internally tangent to $\mathcal{X}_6(x,y)=0$ at the four vertices where:
{\small  
\begin{align}
a_6= \frac{\left[(3\,a^2-b^2)\delta -(a^2+b^2)b^2\right]a}{a^2b^2+3\delta^2},\;\;\;
b_6= \frac{\left[(a^2-3\,b^2)\delta + (a^2+b^2)a^2\right]b}{a^2b^2+3\delta^2}
\label{eqn:x6-ellipse}
\end{align}
}
\end{remark}

\begin{figure}
    \centering
    \includegraphics[width=.6\textwidth]{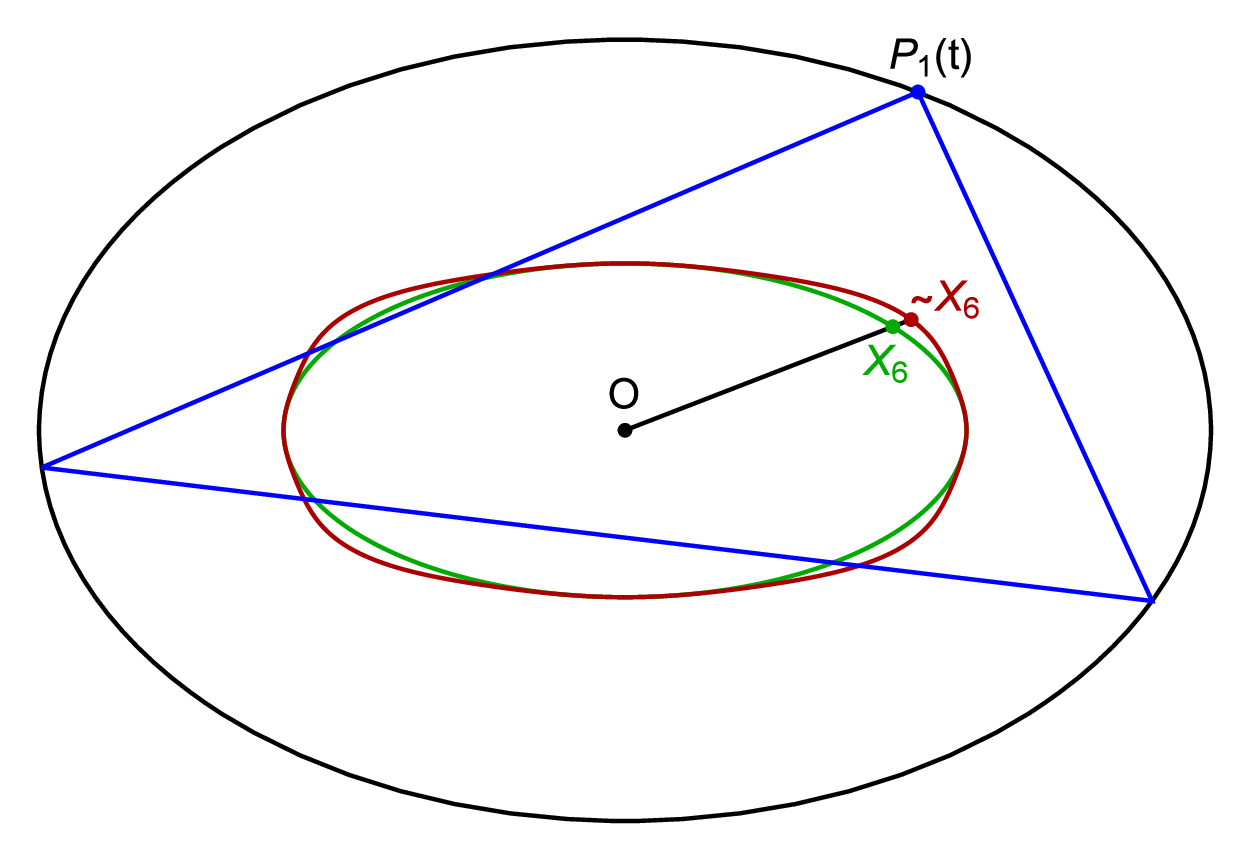}
    \caption{An $a/b=1.5$ EB is shown (black) as well as a sample 3-periodic (blue). At this aspect ratio, the locus of $X_6$ (green) is indistinguishable to the naked eye from a perfect ellipse. To see it is non-elliptic, consider the locus of a point ${\sim}X_6(t)=X_6(t)+k|X_6(t)-Y_6'(t)|$, with $k=2{\times}10^6$ and $Y_6'(t)$ the intersection of $OX_6(t)$ with a best-fit ellipse (visually indistinguishable from green), \eqref{eqn:x6-ellipse}. \href{https://bit.ly/3qc0Z0L}{app}}
    \label{fig:symmedian}
\end{figure}

Table~\ref{tab:quartic-coeffs} shows the above coefficients numerically for a few values of $a/b$.

\begin{table}[H]
    \centering
$$
\begin{array}{|c|c|c|c|c|c|c|c|}
\hline
 \text{a/b} & a_6 & b_6 & c_1/c_3 & c_2/c_3 & c_4/c_3 & c_5/c_3 & A(\mathcal{E}_6)/A(\mathcal{X}_6) \\
 \hline
  1.25 & 0.433 & 0.282 & 0.211 & 1.185 & -0.040 & -0.095 & 0.9999 \\
 1.50 & 0.874 & 0.427 & 0.114 & 2.184 & -0.087 & -0.399 & 0.9998 \\
 2.00 & 1.612 & 0.549 & 0.052 & 4.850 & -0.134 & -1.461 & 0.9983 \\
 3.00 & 2.791 & 0.620 & 0.020 & 12.423 & -0.157 & -4.769 & 0.9949 \\
 \hline
\end{array}
$$
\caption{Coefficients $c_i/c_3$, $i=1,2,4,5$ for the quartic locus of $X_6$ as well as the axes $a_6,b_6$ for the best-fit ellipse, for various values of $a/b$. The last-column reports the area ratio of the internal ellipse $\mathcal{E}_6$ (with axes $a_6,b_6$) to that of the quartic locus $\mathcal{X}_6$, showing an almost exact match.}
\label{tab:quartic-coeffs}
\end{table}

\subsection{Locus Triple Winding}
\label{sec:triple-winding}

As an illustration of Lemma~\ref{lem:center-cover}, consider the elliptic locus of $X_1$, the Incenter\footnote{The same argument is valid for the non-elliptic locus of, e.g., $X_{59}$, Figure~\ref{fig:incenter-loci} (right).}. Consider the locus of a point $Y_1$ located on between $X_1$ and an Intouchpoint $I_1$, Figure~\ref{fig:incenter-loci} (left):
 
\begin{equation*}
Y_1(t;\rho)=(1-{\rho})X_1(t)+{\rho}I_1(t),\;\;\;\rho\in[0,1]
 \end{equation*}
 
\noindent When $\rho=1$ (resp. $0$), $Y_1(t)$ is the two-lobe locus of the Intouchpoints (resp. the elliptic locus of $X_1$). With $\rho$ just above zero, $Y_1$ winds thrice around the EB center. At $\rho=0$, the two lobes and the remainder of the locus become one and the same: $Y_1$ winds thrice over the locus of the Incenter, i.e., the latter is the limit of such a convex combination.

It can be shown that at $\rho=\rho^*$, with $\rho^*=1-(b/a)^2$, the two $Y_1(t)$ lobes touch at the the EB center. When $\rho>\rho^*$ (resp. $\rho<\rho^*$), the locus of $Y_1(t)$ has winding number 1 (resp. 3) with respect to the EB center, see Figure~\ref{fig:inc-wind3}.

A similar phenomenon occurs for loci of convex combinations of the following pairs: (i) Barycenter $X_2$ and a side midpoint, (ii) Circumcenter $X_3$ and a side midpoint, (iii) Orthocenter $X_4$ and altitude foot, etc., see \cite[pl\#11,12]{dsr_playlist_2020}.

\begin{figure}
    \centering
    \includegraphics[width=\textwidth]{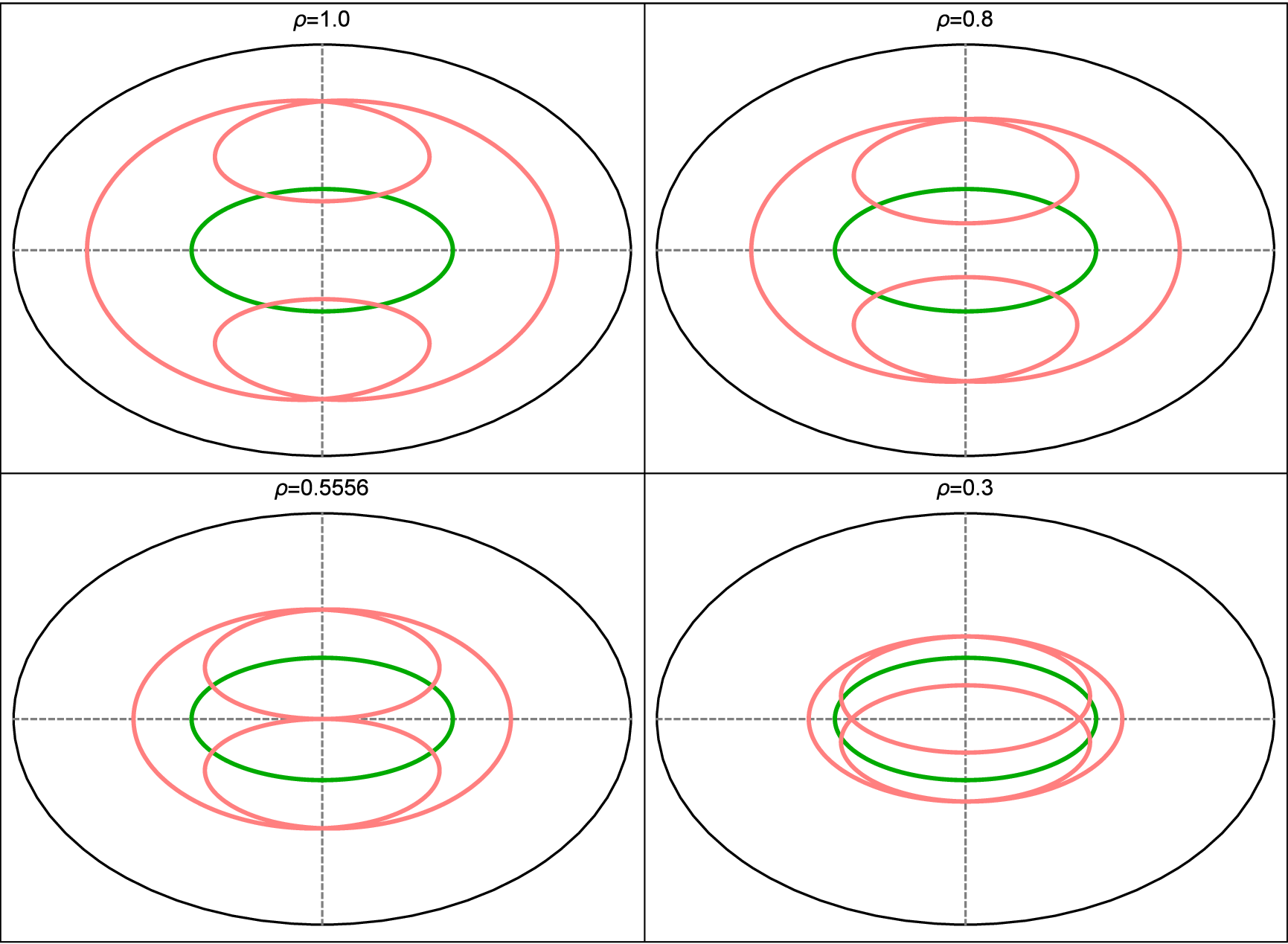}
    \caption{An $a/b=1.5$ EB is shown (black) as well as the elliptic locus of $X_1$ (green) and the locus of $Y_1(t)$ (pink), the convex combination of $X_1(t)$ and an Intouchpoint given by a parameter $\rho\in[0,1]$, see Figure~\ref{fig:incenter-loci}(left). At $\rho=1$ (top-left), $Y_1(t)$ is the two-lobe locus of the Intouchpoint. For every tour of an orbit vertex $P_1(t)$ around the EB, $Y_1(t)$ winds once over its locus. At $\rho=0.8$ (top-right) the lobes approach each other but still lie in different half planes. At $\rho=\rho^*=1-(b/a)^2$, the lobes touch at the EB center (bottom-left). If $\rho\in(0,\rho^*)$, the two lobes self-intersect twice. As $\rho{\rightarrow}0$, the two lobes become nearly coincidental (bottom-right). At $\rho=0$, the $Y_1$ locus with its two lobes all collapse to the Incenter locus ellipse (green), in such a way that for every tour of $P_1(t)$ around the EB, $X_1$ winds thrice over its locus. \href{https://youtu.be/3Gr3Nh5-jHs}{Video 1}, \href{https://youtu.be/HZFjkWD_CnE}{Video 2}}
    \label{fig:inc-wind3}
\end{figure}

\section{Toward a Typification of Loci}
\label{sec:algebraic}
Our use of a few dozen centers listed in \cite{etc} was a means to validate our approach. In general we would like to predict locus type based on any triangle function, hand-curated or not. Below we take a few steps toward building a practical Computational Algebraic Geometry context useful for practitioners.

\subsection{Trilinears: No Apparent Pattern}

When one looks at a few examples of triangle centers whose loci are elliptic vs non, one finds no apparent algebraic pattern in said trilinears, Table~\ref{tab:center-trilinears} in Appendix~\ref{app:constr}. 

A few observations include:

\begin{itemize}
\item The locus of a triangle center is symmetric about both EB axes, Lemma~\ref{lem:axisymmetric}, Section~\ref{sec:algebraic}.
\item Some trilinear centers rational on the sidelengths which produce (i) elliptic loci (e.g., $X_1,X_2$, etc.) as well as (ii) non-elliptic (e.g., $X_6$, $X_{19}$, etc.).
\item No locus has been found with more than 6 intersections with a straight line, suggesting the degree is at most 6.
\item No center has been found with non-rational trilinears whose locus is an ellipse\footnote{Not shown, but also tested were non-rational triangle centers $X_j$, $j=14,\,16,\,17,\,18,\,359,\,360,\,364,\,365,\,367$.}, suggesting that the locus of non-rational centers is always non-elliptic.
\end{itemize}

\subsection{An Algebro-Geometric Ambient}

Given EB semi-axes $a,b$, our pro\-blem can be described by the following 14 variables:

\begin{itemize}

\item  6 triangle vertex coordinates, $P_i= (x_i, y_i), \, i=1,2,3$; 

\item 3 sidelengths $s_1, s_2, s_3$;

\item 3 trilinears  $p,q,r$;
\item  2 locus coordinates $x,y$.
\end{itemize}

These are related by the system of 14 polynomial equations defined in Table~\ref{tab:zero-set}.

\begin{table}
\scriptsize
\[
\begin{array}{|c|l|l|}
\hline
\textbf{eqns.} & \textbf{description} & \textbf{zero set of} \\
\hline
3 & \text{vertices on the EB} & (x_i/a)^2 + (y_i/b)^2 - 1\;,i = 1,2,3 \\
\hline
3 & \begin{array}{l} \text{reflection law at $P_j$} \\ j,k,\ell\;\text{cyclic}, \mathcal{A}=\text{diag}(1/a^2;1/b^2)\end{array} &
\begin{array}{l}
(\mathcal{A} P_j . P_{\ell}- \mathcal{A} P_j.P_j)|P_k-P_j| \\
\;\;\;-(\mathcal{A} P_j . P_k -  \mathcal{A} P_j . P_j)|P_{\ell} - P_j|
\end{array} \\
\hline
3 & \text{sidelengths} & (x_i-x_j)^2 + (y_i-y_j)^2 -  s_k^2 \\
\hline
2 & \text{locus cartesians, \eqref{eqn:trilin-cartesian}} & (p s_1 + q s_2 + r s_3) (x, y) - p s_1 P_1 + q s_2 P_2 + r s_3 P_3 \\
\hline
3 & \text{trilinears (must rationalize)} & p - h(s_1, s_2, s_3);\; q- h(s_2, s_3, s_1);\; r- h(s_3, s_1, s_2) \\
\hline
\end{array}
\]
\caption{System of 14 equations which the locus must satisfy. The three equations in the third line are obtained from the reflection law -- angle of incidence equals the angle of reflection -- imposed on the vertices of a triangular orbit.}
\label{tab:zero-set}
\end{table}

Since the 3-periodic family of orbits is one dimensional, out of the first 9 equations, one is functionally dependent on the rest. Therefore, we have 13 independent equations in 14 variables, yielding a 1d algebraic variety, which can be complexified if desired, as in \cite{corentin19,glutsyuk2014,griffiths1978,olga14}.

Can tools from computational Algebraic Geometry \cite{Schenck2003-GA,Sturmfels97-resultants} be used to eliminate 12 variables automatically, thus obtaining a single polynomial equation $\mathcal{L}(x,y)=0$ whose Zariski closure contains the locus? Below we provide a method based on the theory of resultants \cite{lang,Sturmfels97-resultants} to compute $\mathcal{L}$ for a subset of triangle centers.

\subsection{When Trilinears are Rational}
\label{sec:rational-trilinears}

Consider a triangle center $X$ whose trilinears $p:q:r$ are rational on the sidelengths $s_1,s_2,s_3$, i.e., the triangle center function $h$ is rational, equation \eqref{eqn:ftrilins}.

\begin{theorem}
The locus of a rational triangle center is an algebraic curve.
\label{thm:rational-center}
\end{theorem}

We thank one of the referees for kindly contributing the outline for this proof.
 
\begin{proof}
Consider the complexified variables of the first 4 rows of the table \ref{tab:zero-set}: the $x_i, y_i$ and $s_i$. We have a 9 dimensional complex space. Consider the Zariski closure $V$ in $\cp 9$ of the complex algebraic subset defined by the 9 first equations given in the Table \ref{tab:zero-set}.
Explicitly we have the equations:
{\small  
\begin{align*}
    f_i&=\frac{x_i^2}{a^2}+\frac{y_i^2}{b^2}-1=0, \; (i=1,2,3)\\
    f_4&= (\frac{x_1 x_2 }{ a^2} + \frac{y_1 y_2 }{b^2}  - 1)s_2 - (\frac{x_1 x_3}{ a^2} +\frac{ y_1 y_3}{b^2 }- 1) s_3=0\\
    f_5&=(\frac{x_1 x_2 }{ a^2} + \frac{y_1 y_2 }{b^2}  - 1)s_1 - (\frac{x_2 x_3}{ a^2} +\frac{ y_2 y_3}{b^2 }- 1) s_3=0\\
    f_6&=(\frac{x_1 x_3 }{ a^2} + \frac{y_1 y_3 }{b^2}  - 1)s_1 - (\frac{x_2 x_3}{ a^2} +\frac{ y_2 y_3}{b^2 }- 1) s_2=0\\
    f_7&= (x_2 - x_1)^2 + (y_2 - y_1)^2 - s_3^2=0 \\
    f_8&=  (x_2 - x_3)^2 + (y_2 - y_3)^2 - s_1^2=0 \\
    f_9&=(x_3 - x_1)^2 + (y_3 - y_1)^2 - s_2^2=0\\
\end{align*}
}

By construction, it is a complex projective algebraic subset of $\cp 9$. We have that $ s_2f_5-s_3f_6-s_1f_4=0$. Computing its dimension it follows that:
 
\begin{enumerate}[(i)]

\item the rank of the Jacobian matrix given by the 5 equations $f_1=f_2=f_3=f_4=f_5=0$ in relation to the variables $(x_1,y_1,x_2,y_2,x_3) $ or $(x_1,y_1,x_2,y_2,y_3) $ is generically 5. 

\item the rank of the Jacobian matrix given by the 9 equations is generically 8. 
\item the $s_k$ are  determined by the $x_i, y_i$ (last 3 equations of Table~\ref{tab:zero-set}) and that the set of complex triangular orbits is of dimension 1, see \cite{corentin19,glutsyuk2014,griffiths1978,olga14}, and \cite{garcia2019-incenter} for an explicit parametrization of $V$. Longer (explicit) expressions  appear in Appendix~\ref{app:rational-support}.

\end{enumerate}

This implies that $V$ is a projective algebraic curve of $\cp 9$, the complex projective space of dimension 9. The map which associates any point of $V$ with $X=(x,y)\in\cp 2$ as in Equation~\ref{eqn:trilin-cartesian} (where $p, q, r$ are given rational maps of the $s_i$) is well-defined and holomorphic on an algebraic open subset of $V$, and hence, on the complement of a discrete subset of $V$. Therefore, it is defined and holomorphic everywhere on $V$. Now its image is an analytic subset of $\cp 2$ (Remmert proper mapping theorem) hence an algebraic subset (Chow theorem), see \cite[Ch.V]{gunning1965}. Its dimension is less or equal than 1, otherwise it would be the whole $\cp 2$ (an impossibility, since it is bounded for real points). 
When the dimension of the image is zero the locus degenerates to a point. \end{proof}

A typical example of a point-locus is that of the Mittenpunkt $X_9$, which remains stationary at the center of the EB. In this case $p=h(s_1,s_2,s_3)=s_2+s_3-s_1$, $q=h(s_2,s_3,s_1)=s_1+s_3-s_2$ and $r=h(s_3,s_1,s_2)=s_1+s_2-s_3$. In the EB we have that $X_9=(0,0)$. 

\begin{proposition}
Over 3-periodics in the EB, the Mittenpunkt $X_9$ is the only triangle center whose locus is a point.
\end{proposition}

\begin{proof}
Given the four-fold symmetry in the one-dimensional family of 3-periodics in the EB, the locus of any triangle center must be symmetric about both the vertical and horizontal semi-axes of the EB. So if some center's locus is a point, said point must be at the center of the EB. Over a continuous set of generic triangles (such as those in the 3-periodic family), two distinct triangle centers cannot always coincide (other than in a discrete set of configurations), so the claim follows.
\end{proof}

\subsection{Algorithm to compute the locus polynomial}
Our algorithm is based on the following 3-steps which yield an algebraic curve $\mathcal{L}(x,y)=0$ which contains the locus. We refer to Lemmas \ref{lem:1coord} and \ref{lem:2sides} appearing below. Appendix~\ref{app:rational-support} contains  supporting expressions.

\begin{step}
Introduce the symbolic variables $u, u_1, u_2$:
\begin{equation*}
    u^2 + u_1^2 = 1,\;\;\;\rho_1\, u^2 + u_2^2 = 1.
\end{equation*} 
\end{step}
 \noindent The vertices will be given by rational functions of   $u, u_1, u_2$ 
\begin{equation*} P_1 = (a\,u, b\,u_1),\;\;P_2 = (p_{2x}, p_{2y})/q_2,\;\;\;P_3 = (p_{3x}, p_{3y})/q_3 
\end{equation*}
 
\noindent Expressions for $P_1,P_2,P_3$ appear in Appendix \ref{app:rational-support} as do equations $g_i=0$, $i=1,2,3$, polynomial in $ s_i,u,u_1,u_2$.
 
\begin{step}Express the locus  $X$ as a  rational function on  $u,u_1, u_2, s_1, s_2, s_3$.
\end{step}

Convert $p:q:r$ to cartesian coordinates $X = (x,y)$ via Equation~\eqref{eqn:trilin-cartesian}. From Lemma~\ref{lem:1coord}, it follows that
$\left(x,y\right)$ is rational on $u,u_1,u_2,s_1,s_2,s_3$.
\begin{equation*} x=\mathcal{Q}/\mathcal{R},\;\;\;y=\mathcal{S}/\mathcal{T}
\end{equation*}

\noindent To obtain the polynomials    $\mathcal{Q,R,S,T}$  on said variables $u,u_1,u_2,s_1,s_2,s_3$,
 one substitutes the 
$p,q,r$ by the corresponding rational functions of  $s_1, s_2, s_3$ that define a specific triangle center $X$. Other than that, the method proceeds identically.

\begin{step}
Computing resultants.
Our problem is now cast in terms of the polynomial equations:
\begin{equation*}
E_0= \mathcal{Q}-x\,\mathcal{R}=0,\;\;\; F_0= \mathcal{S}-y\,\mathcal{T}=0
\end{equation*}

\end{step}

Firstly, compute the resultants, in chain fashion:  
\begin{align*}
    E_1=&\textrm{Res}(g_1,E_0,s_1)=0,\;\;\;F_1=\textrm{Res}(g_1,F_0,s_1)=0\\
	E_2=&\textrm{Res}(g_2,E_1,s_2)=0,\;\;\;F_2=\textrm{Res}(g_2,F_1,s_2)=0\\
	E_3=&\textrm{Res}(g_3,E_2,s_3)=0,\;\;\;F_3=\;\textrm{Res}(g_3,F_2,s_3)=0
\end{align*}
		 
It follows that  $E_3(x,u,u_1,u_2)=0$ and $F_3(y,u,u_1,u_2)=0$ are polynomial
equations. In other words, $s_1, s_2, s_3$ have been eliminated. 

Now  eliminate the variables $u_1$ and $u_2$ by taking the following resultants:
\begin{align*}
	E_4(x,u,u_2)=&\textrm{Res}(E_3,u_1^2+u^2-1,u_1)=0\\ 	F_4(y,u,u_2)=&\textrm{Res}(F_3,u_1^2+u^2-1,u_1)=0\\
	E_5(x,u)=&\textrm{Res}(E_4,u_2^2+\rho_1 u^2-1,u_2)=0\\
	F_5(y,u)=&\textrm{Res}(F_4,u_2^2+\rho_1 u^2-1,u_2)=0
\end{align*}

This yields two polynomial equations $E_5(x,u)=0$ and $F_5(y,u)=0$. 

Finally compute the resultant
\[{\mathcal L} = \textrm{Res}(E_5,F_5,u)=0\]
that eliminates $u$ and gives  the implicit algebraic equation for the locus $X$. 

\begin{remark}
In practice,  after  obtaining  a resultant, a human assists the CAS by factoring out spurious branches
(when recognized), in order to get the final answer in more reduced form.   
\end{remark}

When non-rational in the sidelengths, except a few cases (e.g., $X_{359}, X_{360}$ which are transcendental), triangle centers
in Kimberling's list have explicit trilinears involving fractional powers and/or terms containing the triangle area. Those can be made implicit, i.e,
given by zero sets of polynomials involving $p,q,r, s_1, s_2, s_3$.  The chain of resultants to be computed will be increased by three, in order to eliminate the variables $p,q, r$ before (or after) $s_1, s_2, s_3$.


\begin{lemma}
\label{lem:1coord}
Let $P_1=({a}{u},b\sqrt{1-u^2}).$
	The coordinates of $P_2$ and $P_3$ of the 3-periodic billiard orbit are rational functions in the variables $u, u_1, u_2$, where
	$u_1=\sqrt{1-u^2}$, $u_2=\sqrt{1-\rho_1 u^2}$ 
and
	$\rho_1=c^4(b^2+\delta)^2/a^6$. 
		
	\end{lemma}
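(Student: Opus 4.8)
The plan is to build the orbit from its confocal caustic. For $N=3$ the three edges are all tangent to a single confocal ellipse $E_c:\,x^2/a_c^2+y^2/b_c^2=1$, whose semi-axes $a_c,b_c$ are the classical Poncelet/Cayley closure data for $N=3$ and are known rational functions of $a,b,\delta$ (reviewed in Appendix~\ref{app:billiards}, with supporting expressions in Appendix~\ref{app:rational-support}). Consequently, once $P_1=(au,bu_1)$ is fixed, the two edges emanating from $P_1$ are precisely the two tangent lines drawn from $P_1$ to $E_c$, and $P_2,P_3$ are the second intersections of these tangents with the boundary ellipse. By Poncelet's theorem the remaining edge $P_2P_3$ is then automatically tangent to $E_c$, so this construction reproduces the genuine 3-periodic orbit while localizing the entire computation at $P_1$.

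First I would write a line through $P_1$ in homogeneous form $\ell_0+\ell_1 x+\ell_2 y=0$ (avoiding the vertical-slope exception a slope parametrization would introduce) and impose tangency to $E_c$, i.e. that the restriction of $E_c$ to the line has a double root. This yields a single homogeneous quadratic $\alpha\,\mu^2+\beta\,\mu+\gamma=0$ in the one free ratio $\mu$ of line coefficients, with $\alpha,\beta,\gamma$ polynomial in $u,u_1,a,b$ and the caustic parameters; its two roots are the two edges. Solving by the quadratic formula introduces $\sqrt{\beta^2-4\alpha\gamma}$, and the heart of the proof is to simplify this discriminant: using $u_1^2=1-u^2$ together with the explicit $N=3$ caustic semi-axes, I expect $\beta^2-4\alpha\gamma$ to factor as $(\text{rational in }u,u_1)^2\,(1-\rho_1 u^2)$, which both isolates the single new quadratic irrationality $u_2=\sqrt{1-\rho_1 u^2}$ and forces the constant to equal the stated value $\rho_1=c^4(b^2+\delta)^2/a^6$. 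The two edge-lines then have coefficients rational in $u,u_1,u_2$, the two edges being distinguished by the sign of $u_2$.

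Finally I would recover the vertices. Each tangent line meets the boundary ellipse in $P_1$ and exactly one further point; since $P_1$ is already a known root of the line--ellipse intersection quadratic, Vieta's formulas express the coordinates of the second intersection rationally in the (now rational) line coefficients and in $P_1$, hence rationally in $u,u_1,u_2$. Assigning the $+u_2$ edge to $P_2$ and the $-u_2$ edge to $P_3$ gives both vertices as rational functions of $u,u_1,u_2$, as claimed, and simultaneously supplies the explicit forms $P_2=(p_{2x},p_{2y})/q_2$ and $P_3=(p_{3x},p_{3y})/q_3$ referenced in Step 1.

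The main obstacle is the discriminant identity. Establishing that $\beta^2-4\alpha\gamma$ is a perfect rational square times exactly $1-\rho_1 u^2$ --- rather than an unstructured quartic in $u$ under the radical --- is what makes a single auxiliary variable $u_2$ sufficient, and it is precisely here that the special $N=3$ caustic and the quantity $\delta$ enter. The simplification is mechanical but lengthy, so I would delegate it to a CAS, after which one still checks that each second intersection genuinely lies on the EB and that $u_2$ is real throughout the relevant range of $u$. An equivalent route avoiding the caustic would impose the reflection-law rows of the system above (those built from $\mathcal{A}=\mathrm{diag}(1/a^2,1/b^2)$) directly at $P_1$, but this couples all three vertices at once and reaches the same discriminant computation in a less transparent form.
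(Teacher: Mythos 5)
Your argument is correct, but it is a genuinely different proof from the paper's. The paper disposes of the lemma by citation: the explicit parametrization reproduced in Appendix~\ref{app:p1p2p3} already writes $P_2=(p_{2x},p_{2y})/q_2$ and $P_3=(p_{3x},p_{3y})/q_3$ as polynomials in $x_1,y_1$ and the exit-angle quantities $k_1,k_2$ of Appendix~\ref{app:exit-angle}; substituting $x_1=a\,u$, $y_1=b\,u_1$ makes $k_1$ rational in $u,u_1$, and the single radical inside $k_2$ becomes a constant multiple of $u_2$, which is all the lemma asserts. You instead re-derive that parametrization geometrically: tangents from $P_1$ to the $N=3$ confocal caustic (whose semi-axes $a_c=a(\delta-b^2)/c^2$, $b_c=b(a^2-\delta)/c^2$ are rational in $a,b,\delta$, Appendix~\ref{app:ell_axes}), Poncelet closure to identify that construction with the billiard orbit, and Vieta to recover the second intersections. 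What your route buys is self-containedness and a conceptual origin for $u_2$ as the radical of a tangent-pencil discriminant; what the paper's buys is brevity, at the cost of resting entirely on formulas imported from prior work. Two remarks. First, the step you single out as the main obstacle is easier than you fear: the tangency discriminant for the pencil through $P_1$ is proportional to $b_c^2x_1^2+a_c^2y_1^2-a_c^2b_c^2$, and after $x_1=au$, $y_1^2=b^2(1-u^2)$ this is even and quadratic in $u$, hence \emph{automatically} of the form $A(1-\rho_1 u^2)$ with $A=a_c^2(b^2-b_c^2)$ and $\rho_1=(a_c^2b^2-a^2b_c^2)/A$; no factorization miracle is needed. (The constant $\sqrt{A}=a^2b(\delta-b^2)\delta_1/c^4$ does involve the irrational constant $\delta_1=\sqrt{2\delta-a^2-b^2}$, but the lemma claims rationality only in the variables $u,u_1,u_2$, so this is harmless.) Second, actually carrying out your computation gives $\rho_1=c^6/\bigl[a^2(\delta-b^2)^2\bigr]=c^2(b^2+\delta)^2/a^6$, which differs by a factor of $c^2$ from the constant stated in the lemma; since the stated value exceeds $1$ already at $a/b=3/2$ (which would make $u_2$ imaginary for $u$ near $\pm 1$), your derivation in fact exposes a typo in the lemma's statement rather than a flaw in your approach, and the same corrected value follows from the paper's own route via the radical in $k_2$.
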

	
	\begin{proof}
	Follows directly from the parametrization of the billiard orbit, Appendix~\ref{app:p1p2p3}. In fact,  $P_2=(x_2(u),y_2(u)) =( p_{2x}/q_2, p_{2y}/q_2)$ and $P_3=(x_3(u),y_3(u))$ $=( p_{3x}/q_3, p_{3y}/q_3)$, where $p_{2x}$, $p_{2y}$, $p_{3x}$ and $p_{3y}$ have degree $4$ in $(u,u_1,u_2)$  and $q_2$, $q_3$ are algebraic of degree $4$ in $u$. Expressions for $u_1,u_2$ appear in Appendix~\ref{app:exit-angle}.
\end{proof}
	
\begin{lemma}
\label{lem:2sides} Let $P_1=(a u,b\sqrt{1-u^2}).$ Let $s_1$, $s_2$ and $s_3$ the sides of the triangular orbit ${P_1}{P_2}{P_3}$. Then $g_1(u,s_1)=0$, $g_2(s_2,u_2,u)=0$ and $g_3(s_3,u_2,u)=0$ for polynomial functions $g_i$, defined in  Appendix~\ref{app:alg_locus}.  
\end{lemma}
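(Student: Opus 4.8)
The final statement is Lemma~\ref{lem:2sides}: given $P_1=(au,b\sqrt{1-u^2})$, the three orbit sidelengths satisfy polynomial relations $g_1(u,s_1)=0$, $g_2(s_2,u_2,u)=0$, and $g_3(s_3,u_2,u)=0$. The goal is to show each sidelength is algebraic (in fact satisfies an explicit polynomial equation) in terms of $u$ and the auxiliary variables $u_2=\sqrt{1-\rho_1 u^2}$.

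\bigskip

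The plan is to leverage Lemma~\ref{lem:1coord}, which is already proven and gives us that $P_2=(p_{2x}/q_2, p_{2y}/q_2)$ and $P_3=(p_{3x}/q_3, p_{3y}/q_3)$ are rational functions of $u,u_1,u_2$ with explicit denominators $q_2,q_3$ algebraic of degree $4$ in $u$. The sidelengths are defined by $s_1^2=|P_3-P_2|^2$, $s_2^2=|P_1-P_3|^2$, $s_3^2=|P_2-P_1|^2$. So the strategy is simply to form these squared-distance expressions and then clear denominators and eliminate the residual square roots (i.e.\ the odd powers of $u_1$) to arrive at honest polynomial relations. First I would write $s_3^2 = (x_2-au)^2+(y_2-b u_1)^2$ using the rational forms for $P_2$; multiplying through by $q_2^2$ gives a polynomial identity relating $s_3^2 q_2^2$ to polynomials in $u,u_1,u_2$. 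The appearance of $u_1$ is only through the known relation $u_1^2=1-u^2$, so any expression can be reduced to be at most linear in $u_1$; to kill the remaining $u_1$ one isolates the $u_1$-term and squares once, using $u_1^2=1-u^2$, producing a polynomial $g_3(s_3,u_2,u)=0$ purely in $s_3,u,u_2$. The same procedure applied to $s_2^2=(au-x_3)^2+(bu_1-y_3)^2$ yields $g_2(s_2,u_2,u)=0$.

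\bigskip

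For $g_1$, the claim is stronger: $g_1$ is asserted to depend only on $u$ and $s_1$, with no $u_2$. This is the one piece that deserves care and is where I expect the real content to lie. The natural explanation is that $s_1=|P_3-P_2|$, the side opposite the apex vertex $P_1$, is tangent to the confocal caustic, and by billiard integrability its length is governed by a conserved quantity (the rotation number / caustic parameter) together with the position of $P_1$. Concretely, once $P_1$ is fixed on the ellipse, the chord $P_2P_3$ is the unique caustic-tangent chord closing the triangle, and a symmetry/elimination argument should show the dependence on $u_2$ cancels, leaving a relation in $u$ and $s_1$ alone. In practice I would compute $s_1^2=|P_3-P_2|^2$ from the Lemma~\ref{lem:1coord} expressions, clear the denominators $q_2,q_3$, reduce powers of $u_1$ via $u_1^2=1-u^2$ and powers of $u_2$ via $u_2^2=1-\rho_1 u^2$, and verify with a CAS that the $u_2$-dependence drops out after these reductions; the surviving identity is $g_1(u,s_1)=0$. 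The honest bookkeeping of denominator clearing and the single squaring step (which may introduce the spurious branches flagged in the Remark) is routine but tedious, so I would delegate it to a CAS and merely record the resulting $g_i$ in Appendix~\ref{app:rational-support}.

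\bigskip

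The main obstacle, then, is not the existence of polynomial relations—that follows almost formally from Lemma~\ref{lem:1coord} since sums of squares of rational functions, after clearing denominators and squaring away the lone square root $u_1$, are always polynomial—but rather establishing the sharper structural claim that $g_1$ is free of $u_2$. That reduction is precisely what makes the subsequent resultant chain in Step~3 tractable, so it is worth isolating. I expect it to follow from the integrability identity already invoked in the algebro-geometric setup (the statement that one of the six vertex/reflection equations is functionally dependent on the rest), which encodes the confocal caustic constraint and thereby forces the $u_2$-dependence of the opposite side to collapse into the single parameter $u$.
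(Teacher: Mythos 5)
Your proposal takes essentially the same route as the paper: both compute $s_1^2-|P_2-P_3|^2$, $s_2^2-|P_1-P_3|^2$, $s_3^2-|P_1-P_2|^2$ from the rational parametrization of Lemma~\ref{lem:1coord}, clear denominators, reduce the radicals $u_1,u_2$ via their defining quadratic relations, and delegate the verification---including the key fact that all $u_1,u_2$-dependence cancels in the $s_1$ equation, which the paper likewise asserts from computation rather than from an integrability argument---to a CAS, with the explicit $g_i$ recorded in Appendix~\ref{app:rational-support}. The only cosmetic difference is that you square once more to remove the residual linear $u_1$ term from $g_2,g_3$, whereas the paper's explicit $g_2,g_3$ retain a $u_1 u_2$ product term; that is bookkeeping, not a different method.
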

	
\begin{proof}
Using the parametrization of the 3-periodic billiard orbit it follows that $s_1^2-|P_2-P_3|^2=0$ is a rational equation in the variables $u,s_1$. Simplifying, leads to $g_1(s_1,u)=0.$

Analogously for $s_2$ and $s_3$. In this case, the equations $s_2^2-|P_1-P_3|^2=0$ and  $s_3^2-|P_1-P_2|^2=0$   have   square roots $u_2=\sqrt{1-\rho_1 u^2}$ and $u_1=\sqrt{1-u^2}$ and  are rational in the variables $s_2,u_2,u_1,u$ and $s_3,u_2,u_1,u$ respectively. It follows that the degrees of $g_1$, $g_2$, and $g_3$ are $10$. Simplifying, leads to $g_2(s_2,u_2,u_1,u)=0 $ and $g_3(s_3,u_2,u_1,u)=0$. 
\end{proof}

\subsection{Examples}

Table~\ref{tab:zariski} shows the Zariski closure obtained contained the elliptic locus of a few triangle centers. Notice one factor is always of the form $[(x/a_i)^2+(y/b_i)^2)-1]^3$, related to the triple cover described in Section~\ref{sec:triple-winding}. The expressions shown required some manual simplification during the symbolic calculations. 

\begin{table}
$$
\scriptsize
\begin{array}{|c|l|l|l|l|l|}
\hline
X_i & \text{Name} & \text{Spurious Factors} & \text{Elliptic Factor} & a_i & b_i \\
\hline
1 & \text{Incenter} & \text{very long expression} & \begin{array}{l}
[36 (91 + 61 \sqrt{61}) x^2 \\
+ 324 (139 + 19 \sqrt{61}) y^2 \\
+ 22761 - 3969 \sqrt{61}]^6
\end{array} & 0.63504 & 0.29744 \\
\hline
2 & \text{Barycenter} & \begin{array}{l} (91500x^2\\
+49922\sqrt{61}-370993)^2 \end{array} & \begin{array}{l} (100x^2\\+225y^2\\
+52\sqrt{61}-413)^3 \end{array} & 0.26205 & 0.1747 \\
\hline
3 & \text{Circumcenter} & \begin{array}{l}
x\\(3600 x^4-6380 x^2-1539)\\
(40 x^2+5\sqrt{61}-43)^2\\
\begin{array}{l}
(-1104500 y^2\\+591136\sqrt{61}-4633685)^2\end{array}\\
(65880x^2+8527\sqrt{61}-64649)^6 \end{array} & \begin{array}{l}
(5832 x^2\\
+(2752-320\sqrt{61}) y^2\\5751-729\sqrt{61})^3 \end{array} & 0.099146 & 0.4763\\
\hline
\end{array}
$$
\caption{Method of Resultants applied to obtain the zero set for a few sample triangle centers with elliptic loci, for the specific case of $a/b=1.5$. Both spurious and an elliptic factor are present. The latter are raised to powers multiple of three suggesting a phenomenon related to the triple cover, Section~\ref{sec:triple-winding}. Also shown are semi-axes $a_i,b_i$ implied by the elliptic factor. These have been checked to be in perfect agreement with the values predicted for those semi-axes in \cite{garcia2019-incenter}.}
\label{tab:zariski}
\end{table}

\section{Conclusion}
\label{sec:conclusion}
A few interesting questions are posed to the reader.

\begin{itemize}
    \item Can the degree of the locus of a triangle center or derived triangle vertex be predicted based on its trilinears?
    \item Is there a triangle center such that its locus intersects a straight line more than 6 times?
    \item Certain triangle centers have non-convex loci (e.g., $X_{67}$ at $a/b=1.5$ \cite{dsr_locus_gallery_2019}). What determines non-convexity?
    \item What determines the number of self-intersections of a given locus?
    \item In the spirit of \cite{corentin19,olga14}, how would one determine via complex analytic geometry, that $X_6$ is a quartic?
    \item What is the non-elliptic locus described by the summits of equilaterals erected over each orbit side (used in the construction of the Outer Napoleon Triangle \cite{mw}).  \cite[pl\#13]{dsr_playlist_2020}. What kind of curve is it?
    \item Within $X_1$ and $X_{100}$ only $X_i$, $i=13{\ldots}18$ have irrational trilinears. $X_j$, $j=359,\,360$ are transcendental and $X_k$, $k=364,\,365,\,367$ are irrational. In the latter two cases, the loci are numerically non-elliptic. Can any non-rational triangle center produce an ellipse?
    \item $X_6$ is the isogonal conjugate\footnote{Given a point $P$, reflect the three $P$-cevians (lines through vertices and point $P$) about the angular bisectors. These meet at the {\em isogonal conjugate} of $P$.} of $X_2$. Though the latter's locus is an ellipse, the former's is a quartic. In the case of the isogonal pair $X_3$ and $X_4$ both are ellipses. What is the connection with isogonal (and/or isotomic\footnote{Given vertices $P_i$, $i=1,2,3$, of a triangle and a point $X$, let $Q_i$ denote the intersections of $X$-cevians with the opposite side. Let $Q_i'$ be the reflection of $Q_i$ about the midpoint of the corresponding side. Lines $P_i Q_i'$ meet at $X'$, the {\em isotomic conjugate} of $X$.} transformations) and ellipticity?
\end{itemize}

\subsection{Videos and Media}

The reader is encouraged to browse our companion paper \cite{reznik2020-ballet} where intriguing locus phenomena are investigated. Additionally, loci can be explored interactively with our browser-based  \href{https://dan-reznik.github.io/ellipse-mounted-loci-p5js/}{app} \cite{darlan2020-ellipse-mounted}.

Videos mentioned herein are on a \href{https://bit.ly/2REOigc}{playlist} \cite{dsr_playlist_2020}, with links provided on Table~\ref{tab:playlist}.

\begin{table}
\small
\begin{tabular}{|c|l|l|l|}
\hline
{id} & Title & Section & \texttt{youtu.be/<.>}\\
\hline
{01} &
{Locus of $X_1$ is an Ellipse} & \ref{sec:intro} & \href{https://youtu.be/BBsyM7RnswA}{\texttt{BsyM7RnswA}} \\
{02} &
{Locus of Intouchpoints is non-elliptic} & \ref{sec:intro}, \ref{app:early} & \href{https://youtu.be/9xU6T7hQMzs}{\texttt{9xU6T7hQMzs}}\\
{03} &
{$X_9$ stationary at EB center} & \ref{sec:intro} & \href{https://youtu.be/tMrBqfRBYik}{\texttt{tMrBqfRBYik}} \\
{04} &
{Stationary Excentral Cosine Circle}  &
\ref{sec:intro} & \href{https://youtu.be/ACinCf-D_Ok}{\texttt{ACinCf-D\_Ok}} \\
{05} &
{Loci for $X_1\ldots{X_5}$ are ellipses} &
\ref{app:early} & \href{https://youtu.be/sMcNzcYaqtg}{\texttt{sMcNzcYaqtg}} \\
{06} &
{Elliptic locus of Excenters similar to rotated $X_1$} &
\ref{app:early} & \href{https://youtu.be/Xxr1DUo19_w}{\texttt{Xxr1DUo19\_w}} \\
{07} &
{Loci of $X_{11}$, $X_{100}$ and Extouchpoints are the EB} &\ref{app:early} & \href{https://youtu.be/TXdg7tUl8lc}{\texttt{TXdg7tUl8lc}} \\
{08} &
{Family of Derived Triangles} &
\ref{sec:loci_geom} & \href{https://youtu.be/xyroRTEVNDc}{\texttt{xyroRTEVNDc}} \\
{09} &
{Loci of Vertices of Derived Triangles} &
 \ref{app:early},\ref{sec:loci_geom}& \href{https://youtu.be/OGvCQbYqJyI}{\texttt{OGvCQbYqJyI}} \\
{10} &
{Peter Moses' 29 Billiard Points} &
\ref{sec:loci_geom} & \href{https://youtu.be/JdcJt5PExsw}{\texttt{JdcJt5PExsw}}\\
{11} &
{Convex Comb.: $X_1$-Intouch and $X_2$-Midpoint} &
\ref{sec:triple-winding} & \href{https://youtu.be/3Gr3Nh5-jHs}{\texttt{3Gr3Nh5-jHs}}\\
{12} &
{Convex Comb.: $X_3$-Midpoint and $X_4$-Altfoot} &
\ref{sec:triple-winding} & \href{https://youtu.be/HZFjkWD_CnE}{\texttt{HZFjkWD\_CnE}}\\
{13} &
{Oval Locus of the Outer Napoleon Summits} &
\ref{sec:conclusion} & \href{https://youtu.be/70-E-NZrNCQ}{\texttt{70-E-NZrNCQ}} \\
\hline
\end{tabular}
\caption{Videos mentioned in the paper available in a Youtube playlist \cite{dsr_playlist_2020}. The last column contains a clickable YouTube code.}
\label{tab:playlist}
\end{table}

\section*{Acknowledgments}
We warmly thank Clark Kimberling, Peter Moses, Sergei Tabachnikov, Richard Schwartz, Arseniy Akopyan, Olga Romaskevich, Ethan Cotterill, for their invaluable input during this research. We would like to thank the referees for all very useful suggestions, including an alternative proof for Theorem 3.

The first author is fellow of CNPq and coordinator of Project PRONEX/ CNPq/ FAPEG 2017 10 26 7000 508.

\appendix
\section{Triangle Centers}
\label{app:triangle-centers}
\subsection{Triangle Centers}
\label{app:constr}

Constructions for a few basic triangle centers are shown in Figure~\ref{fig:constructions}, using Kimberling's $X_i$ notation \cite{etc}.

\begin{figure}
    \centering
    \includegraphics[width=\textwidth]{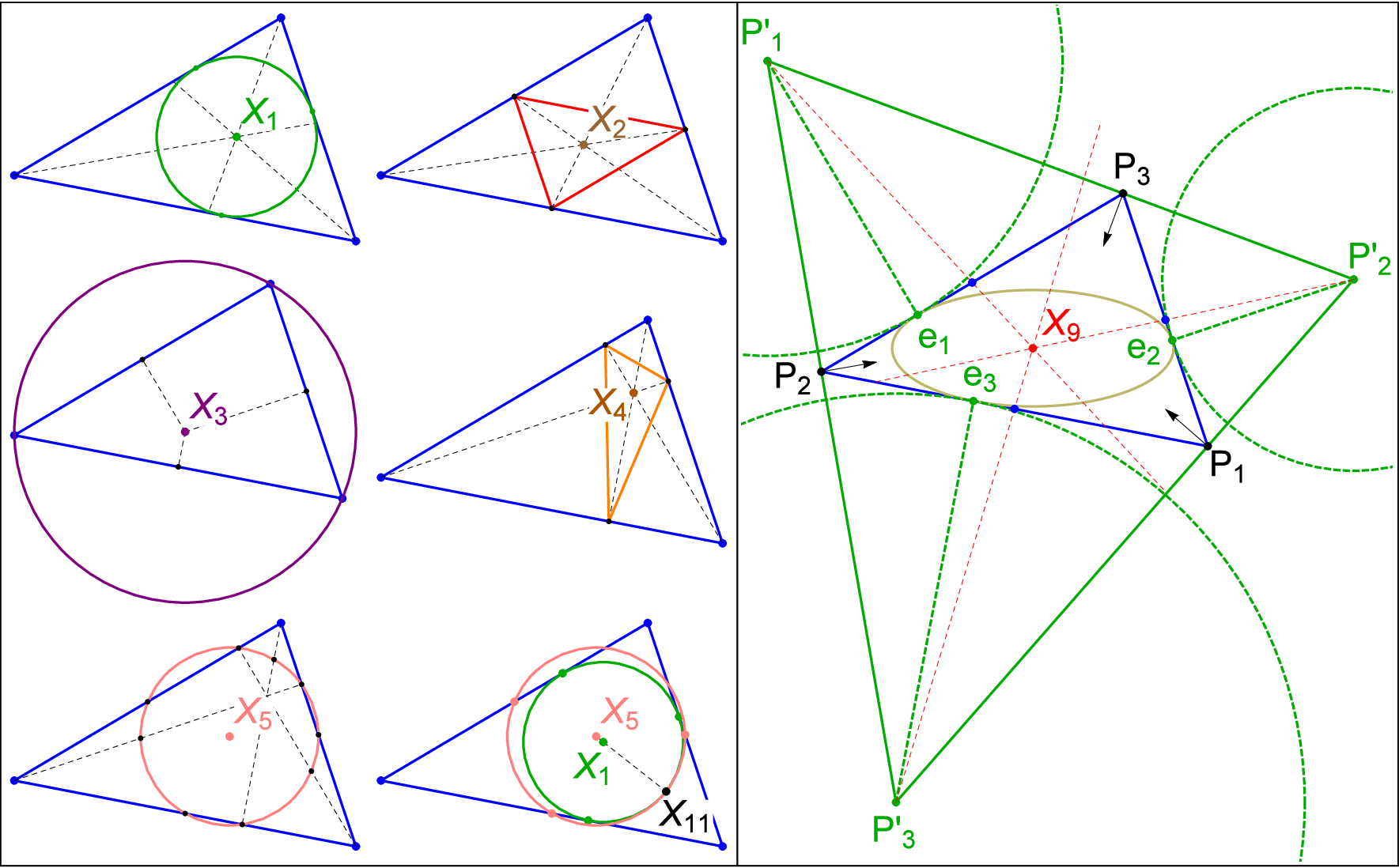}
    \caption{Constructions for triangle centers $X_i$, $i=1,2,3,4,5,9,11$, taken from \cite{reznik2020-intelligencer}.}
    \label{fig:constructions}
\end{figure}

\begin{itemize}
    \item The Incenter $X_1$ is the intersection of angular bisectors, and center of the Incircle (green), a circle tangent to the sides at three {\em Intouchpoints} (green dots), its radius is the {\em Inradius} $r$.
    \item The Barycenter $X_2$ is where lines drawn from the vertices to opposite sides' midpoints meet. Side midpoints define the {\em Medial Triangle} (red).
    \item The Circumcenter $X_3$ is the intersection of perpendicular bisectors, the center of the {\em Circumcircle} (purple) whose radius is the {\em Circumradius} $R$.
    \item The Orthocenter $X_4$ is where altitudes concur. Their feet define the {\em Orthic Triangle} (orange).
    \item $X_5$ is the center of the 9-Point (or Euler) Circle (pink): it passes through each side's midpoint, altitude feet, and Euler points \cite{mw}.
    \item The Feuerbach Point $X_{11}$ is the single point of contact between the Incircle and the 9-Point Circle.
    \item Given a reference triangle $P_1P_2P_3$ (blue), the {\em Excenters} $P_1'P_2'P_3'$ are pairwise intersections of lines through the $P_i$ and perpendicular to the bisectors. This triad defines the {\em Excentral Triangle} (green).\
    \item The {\em Excircles} (dashed green) are centered on the Excenters and are touch each side at an {\em Extouch Point} $e_i,i=1,2,3$.
    \item Lines drawn from each Excenter through sides' midpoints (dashed red) concur at the {\em Mittenpunkt} $X_9$.
    \item Also shown (brown) is the triangle's {\em Mandart Inellipse}, internally tangent to each side at the $e_i$, and centered on $X_9$. This is identical to the $N=3$ caustic.
\end{itemize}

\subsection{Trilinear Coordinates}
\label{app:trilin}

Trilinear coordinates are signed distances to the sides of a triangle $T=P_1P_2P_3$, i.e. they are invariant with respect to similarity/reflection transformations, see Figure~\ref{fig:trilins}.

\begin{figure}
    \centering
    \includegraphics[width=\textwidth]{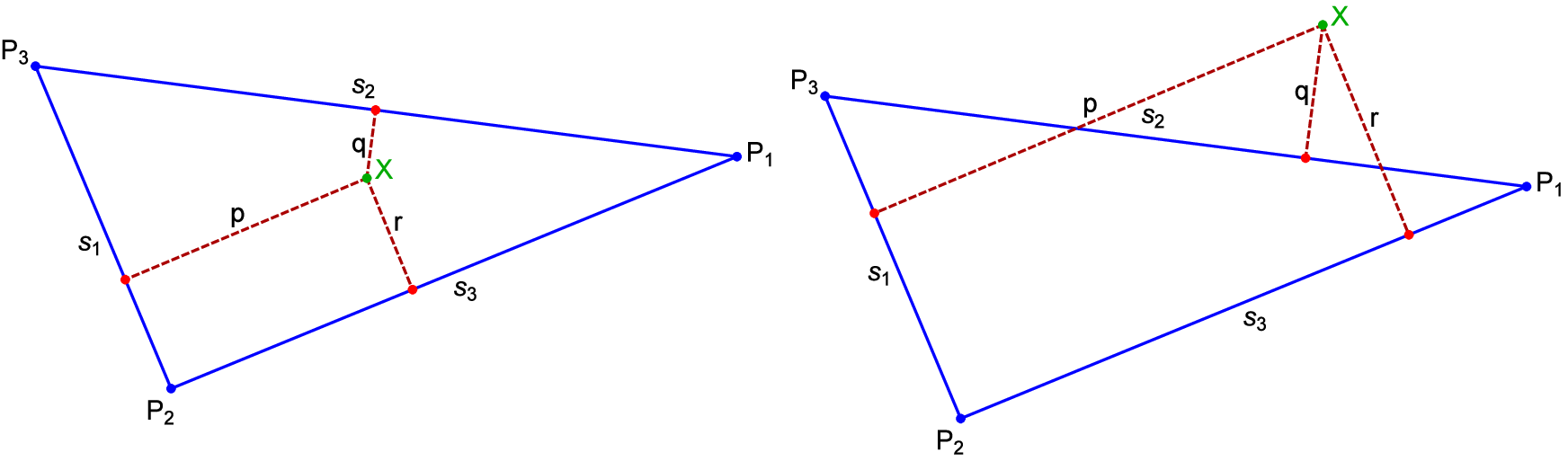}
    \caption{Trilinear coordinates $p:q:r$ for a point $X$ on the plane of a generic triangle $T=P_1P_2P_3$ are homogeneous signed distances to each side whose lengths are $s_1,s_2,s_3$. The red dots are known as the {\em pedal points} of $X$ \cite{mw}. The left (resp. right) figure shows an interior (resp. exterior) point. In both cases $p,r$ are positive however $q$ is positive (resp. negative) in the former (resp. latter) case.}
    \label{fig:trilins}
\end{figure}

Trilinears can be easily converted to cartesian coordinates using \cite{mw}:
\begin{equation}
\label{eqn:trilin-cartesian}
X =\frac{p s_1 P_1 + q s_2 P_2 + r s_3 P_3}{p{s_1}+q{s_2}+r{s_3}}
\end{equation}

\noindent where $s_1=|P_3-P_2|$, $s_2=|P_1-P_3|$ and $s_3=|P_2-P_1|$ are the sidelengths, Figure~\ref{fig:trilins}.
For instance, the trilinears for the barycenter are $p = s_1^{-1},\,q = s_2^{-1},\,r = s_3^{-1}$, yielding the familiar $X_2 = (P_1+P_2+P_3)/3$.


A {\em triangle center} is such a triple obtained by applying a {\em triangle center function} $h$ thrice to the sidelengths $s_1,s_2,s_3$ cyclically \cite{kimberling1993_rocky}:
 \begin{equation}
\label{eqn:ftrilins}
    p:q:r {\iff} h(s_1,s_2,s_3):h(s_2,s_3,s_1):h(s_3,s_1,s_2)
\end{equation}

\noindent $h$ must (i) be {\em bi-symmetric}, i.e., $h(s_1,s_2,s_3)=h(s_1,s_3,s_2)$, and (ii) homogeneous, $h(t s_1, t s_2, t s_3)=t^n h(s_1,s_2,s_3)$ for some $n$ \cite{kimberling1993_rocky}.

See Table~\ref{tab:center-trilinears} for triangle center functions for selected centers. Trilinears can be converted to cartesian coordinates using \eqref{eqn:trilin-cartesian}.

Trilinear coordinates for selected triangle centers appear in Table~\ref{tab:center-trilinears}.

\begin{table}
\scriptsize
\begin{tabular}{|c|l|l|}
\hline
center & name & $h(s_1,s_2,s_3)$ \\
\hline
$X_{1}$ & Incenter & $1$ \\
$X_{2}$ & Barycenter & $1/s_1$  \\
$X_{3}$ & Circumcenter & $s_1(s_2^2+s_3^2-s_1^2)$  \\
$X_{4}$ & Orthocenter & $1/[s_1(s_2^2+s_3^2-s_1^2)]$ \\
$X_{5}$ & 9-Point Center & ${s_2}{s_3}[s_1^2(s_2^2+s_3^2)-(s_2^2-s_3^2)^2]$ \\
$X_{11}$ & Feuerbach Point &  ${s_2}{s_3}(s_2+s_3-s_1)(s_2-s_3)^2$ \\
$X_{88}$ & Isog. Conjug. of $X_{44}$ & $1/(s_2+s_3-2{s_1})$ \\
$X_{100}$ & Anticomplement of $X_{11}$ & $1/(s_2-s_3)$  \\ 
\hline
$\mathbf{X_{6}}$ & \textbf{Symmedian Point} & $\mathbf{s_1}$  \\
$\mathbf{X_{13}^*}$ & \textbf{Fermat Point} & $\mathbf{s_1^4 - 2(s_2^2 - s_3^2)^2 + s_1^2(s_2^2 + s_3^2 + 4\sqrt{3}A)}$ \\
$\mathbf{X_{15}^*}$ & \textbf{2nd Isodynamic Point} & $\mathbf{s_1[\sqrt{3}(s_1^2 - s_2^2 - s_3^2) - 4 A]}$ \\
$\mathbf{X_{19}}$ & \textbf{Clawson Point} & $\mathbf{1/(s_2^2 + s_3^2 - s_1^2)}$ \\ 
$\mathbf{X_{37}}$ & \textbf{Crosspoint of $\mathbf{X_{1},X_{2}}$} & $\mathbf{s_2+s_3}$ \\
$\mathbf{X_{59}}$ & \textbf{Isog. Conj. of $\mathbf{X_{11}}$} & $\mathbf{1/[{s_2}{s_3}(s_2+s_3-s_1)(s_2-s_3)^2]}$ \\
\hline
$X_{9}$ & \text{Mittenpunkt} & $s_2+s_3-s_1$ \\
\hline
\end{tabular}
\caption{Triangle center function $h$ for a few selected $X_i$'s, taken from \cite{etc}. The first 8 centers produce elliptic loci, whereas the remainder (\textbf{boldfaced}) do not. $X_{13}$ and $X_{15}$ are {\em starred} to indicate their trilinears are irrational: these contain $A$, the area the triangle, known (e.g., from Heron's formula) to be irrational on the sidelengths. We haven't yet detected an algebraic pattern which differentiates both groups, nor have we detected an irrational center whose locus is elliptic. Regarding the last row, the Mittenpunkt, we don't consider its locus to be elliptic since it degenerates to a point at the EB center.}
\label{tab:center-trilinears}
\end{table}

\subsection{Derived Triangles}
\label{app:derived-tris}

A {\em derived triangle} $T'$ is constructed from vertices of a reference triangle $T$. These can be represented as a 3x3 matrix, where each row, taken as trilinears, is a vertex of $T'$. For example, the Excentral, Medial, and Intouch Triangles $T'_{exc}$, $T'_{med}$, and $T'_{int}$  are given by \cite{mw}: 
\begin{equation*}
\left[
\begin{matrix}
-1&1&1\\1&-1&1\\1&1&-1
\end{matrix}
\right],\;
\left[
\begin{matrix}
0&s_2^{-1}&s_3^{-1}\\s_1^{-1}&0&s_3^{-1}\\s_1^{-1}&s_2^{-1}&0
\end{matrix}
\right],\;
\left[
\begin{matrix}
0&\frac{s_1 s_3}{s_1-s_2+s_3}&\frac{s_1 s_2}{s_1+s_2-s_3}\\
\frac{s_2 s_3}{-s_1+s_2+s_3}&0&\frac{s_1 s_2}{s_1+s_2-s_3}\\
\frac{s_2 s_3}{-s_1+s_2+s_3}&\frac{s_1 s_3}{s_1-s_2+s_3}&0
\end{matrix}
\right]
\end{equation*}

A few derived triangles are shown in Figure~\ref{fig:derived-isosceles}, showing a property similar to Lemma~\ref{lem:axis-of-symmetry}, Appendix~\ref{app:method-lemmas}, namely, when the 3-periodic is an isosceles, one vertex of the derived triangle lies on the orbit's axis of symmetry.

\begin{figure}[H]
    \centering
    \includegraphics[width=\textwidth]{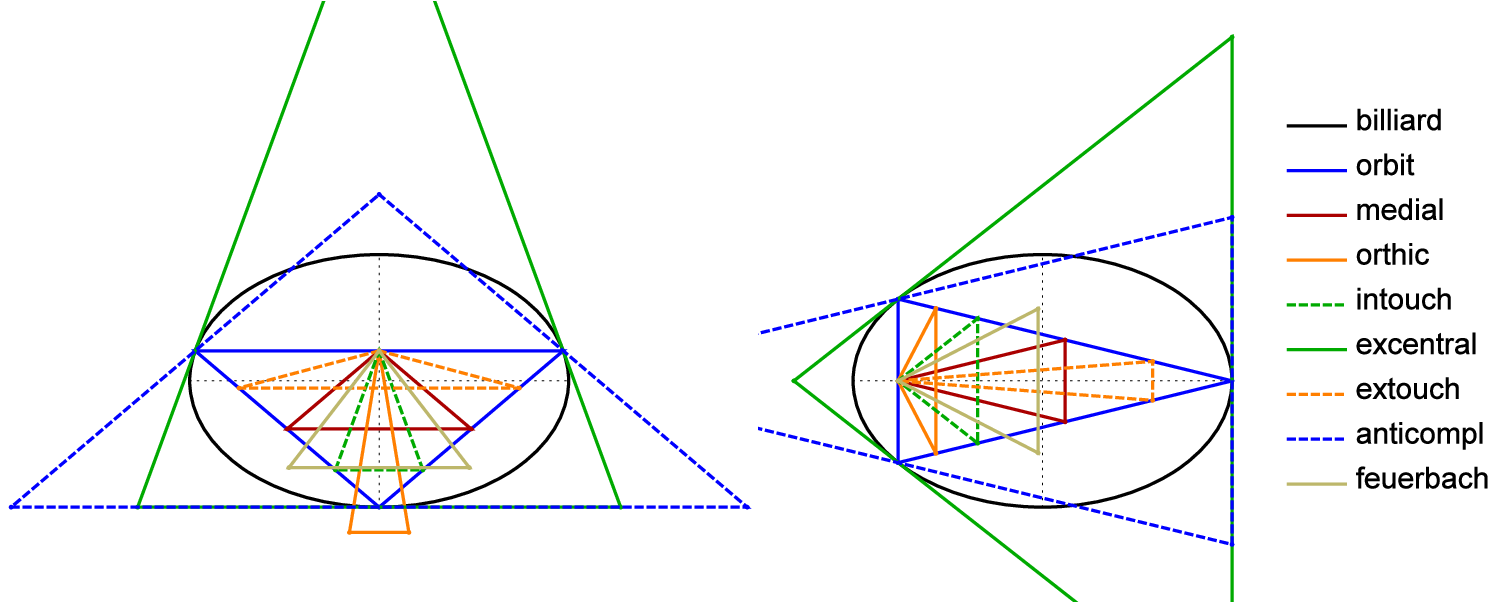}
    \caption{When the orbit is an isosceles triangle (solid blue), any derived triangle will contain one vertex on the axis of symmetry of the orbit. \href{https://youtu.be/xyroRTEVNDc}{Video}}
    \label{fig:derived-isosceles}
\end{figure}

\section{Review: Elliptic Billiards}
\label{app:billiards}
An Elliptic Billiard (EB) is a particle moving with constant velocity in the interior of an ellipse, undergoing elastic collisions against its boundary \cite{rozikov2018,sergei91}, Figure~\ref{fig:billiard-trajectories}. For any boundary location, a given exit angle (e.g., measured from the normal) may give rise to either a quasi-periodic (never closes) or $N$-periodic trajectory \cite{sergei91}, where $N$ is the number of bounces before the particle returns to its starting location. All trajectory segments are tangent to a confocal caustic \cite{sergei91}. The EB is a special case of {\em Poncelet's Porism} \cite{dragovic11}: if one $N$-periodic trajectory can be found departing from some boundary point, any other such point will initiate an $N$-periodic, i.e., a 1d {\em family} of such orbits will exist. A classic result is that $N$-periodics conserve perimeter \cite{sergei91}.

\begin{figure}
    \centering
    \includegraphics[width=\textwidth]{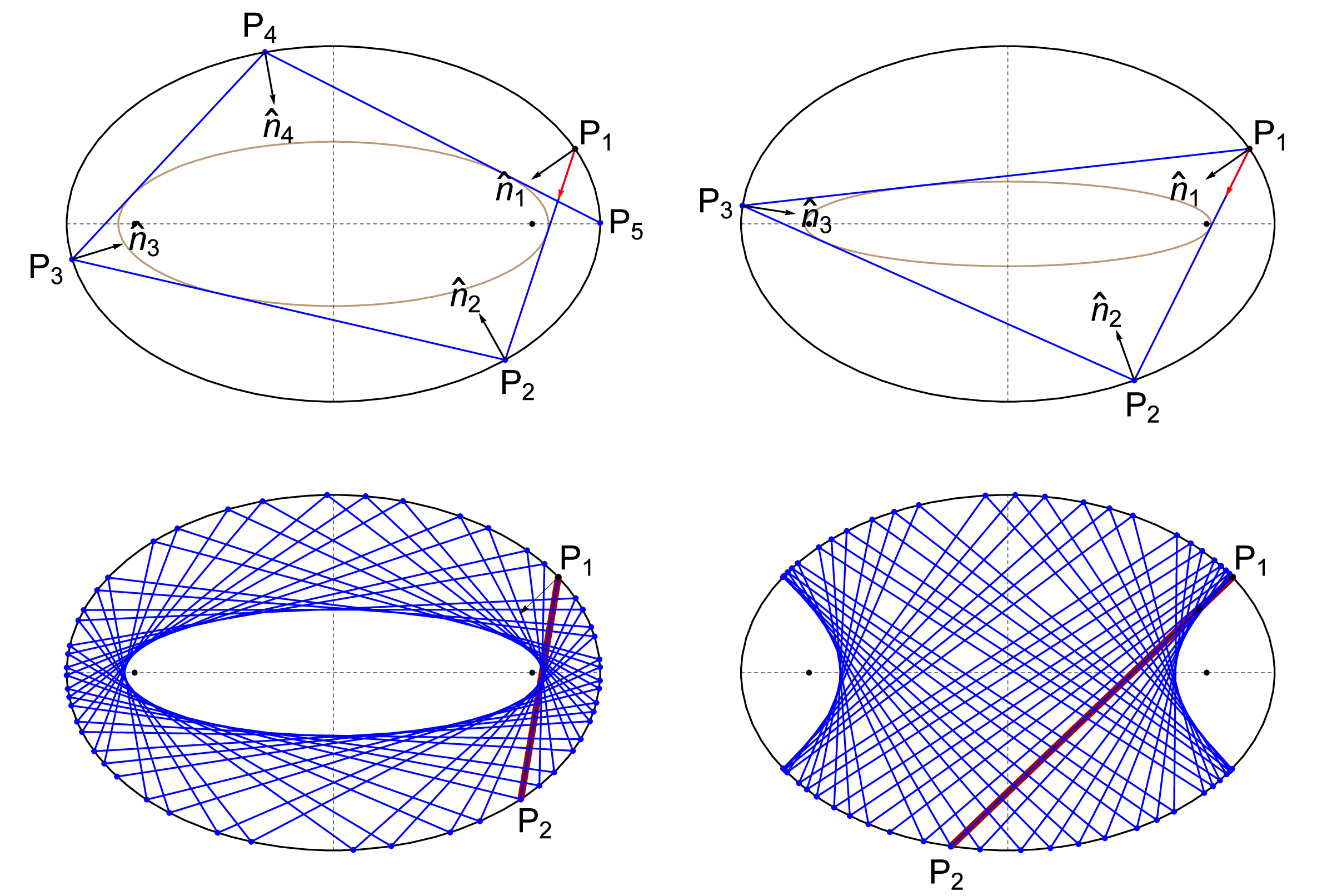}
    \caption{Trajectory regimes in an Elliptic Billiard, taken from \cite{reznik2020-intelligencer}. \textbf{Top left}: first four segments of a trajectory departing at $P_1$ and toward $P_2$, bouncing at $P_i, i=2,3,4$. At each bounce the normal $\hat{n}_i$ bisects incoming and outgoing segments. Joachimsthal's integral \cite{sergei91} means all segments are tangent to a confocal {\em caustic} (brown). \textbf{Top right}: a 3-{\em periodic} trajectory. All 3-periodics in this Billiard will be tangent to a special confocal caustic (brown). \textbf{Bottom}: first 50 segments of a non-periodic trajectory starting at $P_1$ and directed toward $P_2$. Segments are tangent to a confocal ellipse (left) or hyperbola (right). The former (resp. latter) occurs if $P_1P_2$ passes outside (resp. between) the EB's foci (black dots).}
    \label{fig:billiard-trajectories}
\end{figure}

\section{Expressions used in Section~\ref{sec:rational-trilinears}}
\label{app:rational-support}
Let the boundary of the Billiard satisfy Equation~\eqref{eqn:billiard-f}. Assume, without loss of generality, that $a{\geq}b$. Here we provide expressions used in Section~\ref{sec:algebraic}. Let $P_1,P_2,P_3$ be an orbit's vertices.

\subsection{Exit Angle Required for 3-Periodicity}
\label{app:exit-angle}

Consider a starting point $P_1=(x_1,y_1)$ on a Billiard with semi-axes $a,b$. The cosine of the exit angle $\alpha$ (measured with respect to the normal at $P_1$, i.e., $\alpha=\theta_1/2$) required for the trajectory to close after 3 bounces is given by \cite{garcia2019-incenter}: 
\begin{equation*}
\cos^2{\alpha}=\frac{d_1^2\delta_1^2}{\,d_2}=k_1,\;\;\;\sin{\alpha}\cos\alpha=\frac{ \delta_1d_1^2}{d_2 }\sqrt{ d_2 -d_1^4\delta_1^2}=k_2
\end{equation*}
\noindent with 
$c^2=a^2-b^2$, $d_1=(a\,b/c)^2$, $d_2={b}^{4}x_1^2 +{a}^{4}y_1^2$, $\delta=\sqrt{a^4+b^4-a^2 b^2}$, and $\delta_1=\sqrt{2 \delta-a^2-b^2}$.

\subsection{Orbit Vertices}
\label{app:p1p2p3}
Given a starting vertex $P_1=(x_1,y_1)$ on the EB, $P_2=(p_{2x},p_{2y})/q_2$, and $P_3=(p_{3x},p_{3y})/q_3$ where \cite{garcia2019-incenter}:
\begin{align*}
\small
p_{2x}=&-{b}^{4} \left(  \left(   a^2+{b}^{2}\right)k_1 -{a}^{2}  \right) x_1^{3}-2\,{a}^{4}{b}^{2} k_2  x_1^{2}{y_1}\\
&+{a}^{4} \left(  ({a
}^{2}-3\, {b}^{2})k_1+{b}^{2}
 \right) {x_1}\,y_1^{2}-2{a}^{6} k_2 y_1^{3}\\
p_{2y}=& 2{b}^{6} k_2 x_1^{3}+{b}^{4}\left(  ({b
 }^{2}-3\, {a}^{2}) k_1  +{a}^{2}
  \right) x_1^{2}{y_1}\\
&+  2\,{a}^{2} {b}^{4}k_2 {x_1} y_1^{2} -{
a}^{4}  \left(  \left(   a^2+{b}^{2}\right)k_1  -{b}^{2}  \right)  y_1^{3}
\\
q_2=&{b}^{4} \left( a^2-c^2k_1   \right)
x_1^{2}+{a}^{4} \left(  {b}^{2}+c^2k_1  
 \right) y_1^{2} - 2\, {a}^{2}{b}^{2}{c^2}k_2 {x_1}\,{
y_1} \\
p_{3x}=& {b}^{4} \left( {a}^{2}- \left( {b}^{2}+{a}^{2} \right) \right)
 k_1  x_1^{3} +2\,{a}^{4}{b}^{2}k_2  x_1^{2}{ y_1}\\
 &+{a}^{4} \left( 
  k_1 \left( {a}^{2}-3\,{b}^{2}
 \right) +{b}^{2} \right) { x_1}\, y_1^{2} +2\, {a}^{6} k_2 y_1^{3}
\\
p_{3y}=& -2\, {b}^{6} k_2 x_1^{3}+{b}^{4} \left( {a}^{2}+ \left( {b}^{2}-3\,{a}^{2} \right)    k_1 \right) {{ x_1}}^{2}{ y_1}
\\
& -2\,{a}^{2}  {b}^{4} k_2  x_1 y_1^{2}+
 {a}^{4} \left( {b}^{2}- \left( {b}^{2}+{a}^{2} \right)   k_1 \right)\,  y_1^{3},
\\
q_3=& {b}^{4} \left( {a}^{2}-{c^2}k_1   \right) x_1^{2}+{a}^{4} \left( {b}^{2}+c^2k_1  \right)  y_1^{2}+2\,{a}^{2}{b}^{
2} c^2 k_2\, { x_1}\,{ y_1}.
\end{align*}

\subsection{Polynomials Satisfied by the Sidelengths}
\label{app:alg_locus}
 
Let sidelengths $s_1=|P_3-P_2|, s_2=|P_1-P_3|,s_3=|P_2-P_1|$. The following are polynomial expressions on $s_i$ and $u,u_1,u_2$:
{\small  
\begin{align*}
		g_1=&    -h_1 \, s_1^2 
		+h_0,\;\;g_2=- h_1\, s_2^2 -
		h_2 \,u_1 \,u_2 +h_3,\;\;\;g_3=- h_1 \,s_3^2 + h_2\,  u_1\, u_2+h_3\\
		h_0=& 12c^{12}(a^2+b^2+2\delta)u^8+24c^{10}(a^2b^2+2b^4-2\delta c^2 u^6\\
		-&4c^4[10a^{10}-12a^8b^2+11a^6b^4-7a^4b^6+24a^2b^8-8b^{10}\\
		-&  (2(4a^4-2a^2b^2+b^4))(a^4-2a^2b^2+4b^4)
		\delta
		]u^4\\
		+&8a^6 c^2[4a^6-7a^4b^2+11a^2b^4-2b^6-(2(a^2+ab+b^2))(a^2-ab+b^2)\delta]u^2\\
		+&4a^{12} \delta_1^2	\\
		h_1=&  c^2\left(3c^4 u^4-2c^2(a^2-2b^2 u^2-a^4)\right)^2\\
		h_2=&-2{a(b^2-\delta)}\delta_1 u ((3 c^6 \delta+(6 (a^2+b^2)) c^6) u^6+((3 (a^2+4 b^2)) c^4 \delta \\
		-& (3 (2 a^4-a^2 b^2-4 b^4)) c^4) u^4
		+( (b_2-a^2 ) (7 a^4-8 b^4) \delta\\
		+& 2 c^2  (a^2-2 b^2) (a^4-2 b^4)) u^2+a^4 (a^2-4 b^2) \delta-a^4 (2 a^4-3 a^2 b^2+4 b^4))\\
		h_3=&c^6 (9 c^{6} u^{10}+ 1)((18 (a^4+b^4))  \delta-(3 (7 a^6-16 a^4 b^2+29 a^2 b^4-8 b^6)) u^8\\
		+&2c^2\left[     (13 a^8-12 a^6 b^2+49 a^4 b^4-54 a^2 b^6+16 b^8)\right.\\
		-& \left. 2   (10 a^6-7 a^4 b^2+11 a^2 b^4-8 b^6) \delta 
		\right] u^6\\
		+&((28a^4\delta^2-40 a^2 b^6+16 b^8) \delta 
		-  26 a^{10}+12 a^8 b^2+42 a^4 b^6-48 a^2 b^8+16 b^{10}) u^4\\
		+&a(13 a^6-9 a^4 b^2-8 b^4c^2 -(4 (2 a^4+a^2 b^2-2 b^4)) \delta)^4 u^2+2 a^8 ( \delta-a^8
		 c^2)
\end{align*}
}%
	




\section{Proof of Lemmas used in Section~\ref{sec:loci_geom}}
\label{app:method-lemmas}
\begin{lemma*}[\ref{lem:center-cover}]
A parametric traversal of $P_1$ around the EB boundary is a triple cover of the locus of a triangle center $X_i$.
\end{lemma*}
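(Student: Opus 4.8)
The plan is to exhibit a free $\mathbb{Z}/3\mathbb{Z}$ symmetry of the parameter circle under which every Triangle Center is invariant, so that the parametrization visits each point of the locus three times. Parametrize the EB boundary by $P(t)=(a\cos t,\,b\sin t)$, $t\in S^1=\mathbb{R}/2\pi\mathbb{Z}$, and for each starting vertex $P_1=P(t)$ let $P_2=P(\sigma(t))$, $P_3=P(\sigma^2(t))$ be the successive bounce points of the unique $3$-periodic orbit through $P(t)$ (Poncelet's porism and Appendix~\ref{app:p1p2p3} guarantee existence and uniqueness). This defines a \emph{successor map} $\sigma\colon S^1\to S^1$, namely the restriction to the boundary of the billiard reflection map. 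First I would record its basic properties: $\sigma$ is an orientation-preserving homeomorphism (the billiard map on a smooth convex boundary is a twist homeomorphism, and orbits depend continuously on $t$ by integrability), it satisfies $\sigma^3=\mathrm{id}$ (three-periodicity), and it is fixed-point free, since $P(t)\neq P(\sigma(t))$ for every $t$ (each orbit is a genuine, nondegenerate triangle).

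Next I would invoke the classification of finite-order circle homeomorphisms: an orientation-preserving homeomorphism of $S^1$ with $\sigma^3=\mathrm{id}$ and no fixed point is topologically conjugate to rotation by $2\pi/3$, hence every $\sigma$-orbit has exactly three elements and these orbits partition $S^1$ into triples. By construction the three parameters $t,\sigma(t),\sigma^2(t)$ give \emph{the same} unordered vertex set $\{P_1,P_2,P_3\}$, i.e.\ the same orbit triangle with its vertices cyclically relabeled. Because a Triangle Center is built from a bi-symmetric center function (Appendix~\ref{app:triangle-centers}), its Cartesian position via \eqref{eqn:trilin-cartesian} depends only on the triangle as an unordered set and not on the labeling; therefore $X_i(t)=X_i(\sigma(t))=X_i(\sigma^2(t))$. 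Thus $t\mapsto X_i(t)$ is constant on each $\sigma$-orbit and factors through the $3$-to-$1$ quotient $S^1\to S^1/\langle\sigma\rangle$. Since distinct orbit triangles generically produce distinct centers, the induced map on the quotient is generically injective onto the locus, so the original parametrization covers the locus exactly three times --- this is the triple winding illustrated in Section~\ref{sec:triple-winding}.

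The main obstacle is the rigorous verification that $\sigma$ is a fixed-point-free homeomorphism of period exactly three for \emph{all} $t$, including the symmetric (isosceles) configurations of Figure~\ref{fig:sideways-upright-orbit}: continuity comes from smooth dependence of billiard trajectories on initial data, but one must rule out any degeneration in which two vertices collide or the period drops, which here is guaranteed by the fact that the elliptic-billiard $3$-periodic family consists of genuine triangles throughout. A secondary technical point is the generic injectivity of the triangle-to-center map; for the statement as phrased (a triple cover of the locus) it suffices that this map be injective on a dense open subset of the family, which is automatic unless the center is globally constant, as happens for the Mittenpunkt $X_9$ explicitly set aside in Table~\ref{tab:center-trilinears}.
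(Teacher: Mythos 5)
Your argument is sound in substance and rests on the same underlying fact as the paper's proof --- each unordered 3-periodic triangle arises from exactly three positions of $P_1$, and a Triangle Center, being invariant under relabeling of the vertices, depends only on the unordered triangle --- but you package it by a genuinely different route. The paper works concretely: it locates the parameter values $\pm t^*,\pm t^{**}$ at which the orbit is isosceles (Figure~\ref{fig:triple-winding}), splits the fundamental interval $[-t^*,t^*)$ into four sub-arcs on which $X_i$ executes quarter turns (Affirmation~\ref{obs:tstar1}), and then observes that the rest of a full traversal revisits the same 3-periodics (Affirmation~\ref{obs:tstar2}). You instead introduce the successor map $\sigma$ and note it is an orientation-preserving, fixed-point-free circle homeomorphism with $\sigma^3=\mathrm{id}$; in fact you do not even need the conjugacy-to-rotation classification, since $\sigma^3=\mathrm{id}$ together with the absence of fixed points already forces every $\sigma$-orbit to have exactly three elements (a two-element orbit would yield a fixed point). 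Your route makes explicit, via the bi-symmetry of the center function $h$, why $X_i$ is constant along $\sigma$-orbits and hence factors through the 3-to-1 quotient --- a point the paper leaves implicit. The paper's concrete route, on the other hand, buys exactly the extra information the rest of the method consumes: the isosceles anchors $t=0,\pi/2$ furnish the candidate semi-axes in Phase 2 (Figure~\ref{fig:sideways-upright-orbit}), and the quarter-turn structure is what justifies concluding, once the CAS verifies containment in an ellipse, that the locus fills the whole ellipse.

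One caveat: your closing claim that generic injectivity of the induced map on the quotient is ``automatic unless the center is globally constant'' is not correct as a general principle --- a continuous map of $S^1$ into the plane can be generically two-to-one without being constant (wrap a circle twice around a closed curve), in which case the traversal would be a sixfold, not threefold, cover. Ruling this out requires an argument specific to the geometry of the family; this is precisely the content of the paper's Affirmation~\ref{obs:tstar1}, which the paper likewise asserts rather than proves. So your proof is at parity with the paper's on this point, but you should flag the exactness of the count ``three'' as an assertion about the billiard family rather than as automatic.
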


\begin{proof}
Let $P_1(t)=\left(a \cos{t},b \sin{t}\right)$. Let $t^*$ (resp.~$t^{**})$ be the value of $t$ for which the 3-periodic orbit is an isosceles with a horizontal (resp. vertical) axis of symmetry, Figure~\ref{fig:triple-winding}, $t^*>t^{**}$. For such cases one can easily derive\footnote{Indeed, $\sin{t^*}=J b$ and $\cos{t^{**}}=J a$, where $J$ is Joachmisthal's constant ($N=3$) \cite{sergei91}.}:

\begin{equation*}
\tan{t^*}=\frac{b\sqrt{2\delta-{a}^{2}+2\,{b}^{2}}}{a^{2}}, \;\;\;\tan{t^{**}}=  \frac{ \sqrt {2\,\delta-2\,{a}^{2}+{b}^{2}}}{\sqrt{3}\, a}
\end{equation*}

\noindent with $\delta=\sqrt{a^4-a^2 b^2+b^4}$ as above. Referring to Figure~\ref{fig:triple-winding}:

\begin{affirmation}
\label{obs:tstar1}
A continuous counterclockwise motion of $P_1(t)$ along the intervals $[-t^*,-t^{**})$, $[-t^{**},0)$, $[0,t^{**})$, and $[t^{**},t^*)$ will each cause $X_i$ to execute a quarter turn along its locus, i.e., with $t$ varying from $-t^*$ to $t^*$, $X_i$ will execute one complete revolution on its locus\footnote{The direction of this revolution depends on $X_i$ and its not always monotonic. The observation is valid for elliptic or non-elliptic loci alike. See \cite{helman2021-theory} for details.}.
\end{affirmation}

\begin{affirmation}
\label{obs:tstar2}
A continuous counterclockwise motion of $P_1(t)$ in the $[t^*,\pi-t^{*})$ (resp.~$[\pi-t^{*},\pi+t^{*})$, and $[\pi+t^{*},2\pi-t^*)$), visits the same 3-periodics as when $t$ sweeps $[-t^*,-t^{**})$ (resp.~$[-t^{*},t^{*})$, and $[t^*,\pi-t^*)$).
\end{affirmation}

Therefore, a complete turn of $P_1(t)$ around the EB visits the 3-periodic family thrice, i.e., $X_i$ will wind thrice over its locus.
\end{proof}

\begin{figure}
    \centering
    \includegraphics[width=.9\textwidth]{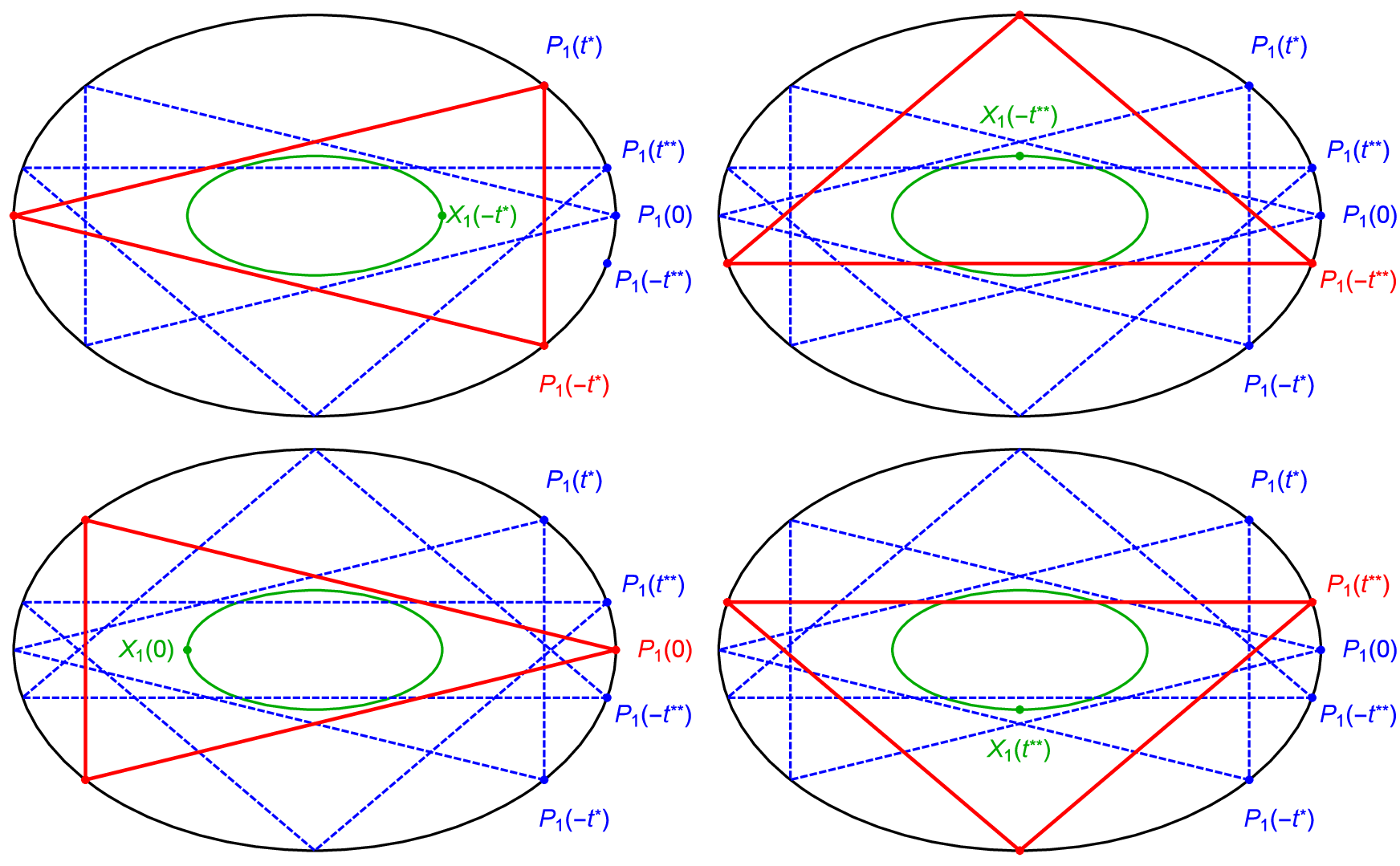}
    \caption{Counterclockwise motion of $P_1(t)$, for $t=[-t^*,t^*)$ can be divided in four segments delimited by $t=(-t^*,-t^{**},0,t^{**},t^{**})$. Orbit positions for the first four are shown (red polygons) in the top-left, top-right, bot-left, and bot-right pictures. At $P_1({\pm}t^*)$ (resp.~$P_1({\pm}t^{**})$) the orbit is an isosceles triangle with a horizontal (resp.~vertical) axis of symmetry. Observations~\ref{obs:tstar1},\ref{obs:tstar2} assert that a counterclockwise motion of $P_1(t)$ along each of the four intervals causes a triangle center $X_i$ to execute a quarter turn along its locus (elliptic or not), and that a complete revolution of $P_1(t)$ around the EB causes $X_i$ to wind thrice on its locus. For illustration, the locus of $X_1(t)$ is shown (green) at each of the four interval endpoints.}
    \label{fig:triple-winding}
\end{figure}
Below we provide proofs of Lemmas \ref{lem:axis-of-symmetry}, \ref{lem:axisymmetric} and \ref{lem:center-cover}.
\begin{lemma*}[\ref{lem:axis-of-symmetry}]
Any triangle center $X_i$ of an isosceles triangle is on the axis of symmetry of said triangle.
\end{lemma*}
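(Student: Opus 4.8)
The plan is to reduce the statement to a one-line computation through the Cartesian conversion formula \eqref{eqn:trilin-cartesian}, exploiting the bi-symmetry of the Triangle Center Function $h$ recalled in Appendix~\ref{app:triangle-centers}. Without loss of generality, orient the isosceles triangle $T=P_1P_2P_3$ so that the two equal sides are those meeting at the apex $P_1$. With the convention $s_1=|P_3-P_2|$, $s_2=|P_1-P_3|$, $s_3=|P_2-P_1|$ (so that $s_i$ is opposite $P_i$), equal legs $|P_1P_2|=|P_1P_3|$ translate into $s_2=s_3$, and the axis of symmetry is precisely the line through $P_1$ and the midpoint $M=(P_2+P_3)/2$ of the base.

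First I would evaluate the trilinears $p:q:r=h(s_1,s_2,s_3):h(s_2,s_3,s_1):h(s_3,s_1,s_2)$ under the constraint $s_2=s_3$. Substituting yields $q=h(s_2,s_2,s_1)$ and $r=h(s_2,s_1,s_2)$, and the bi-symmetry hypothesis $h(s_1,s_2,s_3)=h(s_1,s_3,s_2)$ forces $h(s_2,s_1,s_2)=h(s_2,s_2,s_1)$, hence $q=r$. This single identity is the entire substance of the argument.

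Next I would substitute $s_2=s_3$ and $q=r$ into \eqref{eqn:trilin-cartesian}, obtaining
$$X_i=\frac{p\,s_1\,P_1 + q\,s_2\,(P_2+P_3)}{p\,s_1 + 2\,q\,s_2}=\lambda\,P_1+(1-\lambda)\,M,\qquad \lambda=\frac{p\,s_1}{p\,s_1+2\,q\,s_2},$$
which exhibits $X_i$ as an affine combination (coefficients summing to $1$) of $P_1$ and $M$. Since both $P_1$ and $M$ lie on the axis of symmetry by construction, so does $X_i$, completing the proof.

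I do not expect a genuine obstacle: once the bi-symmetry-driven identity $q=r$ is in hand, the conclusion is immediate. The only points requiring care are bookkeeping ones---matching the ``side opposite vertex'' convention to the chosen orientation of the isosceles, and noting that the denominator $p\,s_1+2\,q\,s_2$ is nonzero whenever the center is defined. An equivalent, more conceptual route would invoke the reflection equivariance of Triangle Centers directly: the reflection $\sigma$ across the axis fixes $P_1$ and swaps $P_2,P_3$, so $\sigma(T)=T$ as an (unlabeled) triangle and therefore $\sigma(X_i)=X_i$; the fixed-point set of a nontrivial reflection is exactly its axis, so $X_i$ lies on it. I would present the trilinear computation as the main proof, since it uses only machinery already introduced in the excerpt.
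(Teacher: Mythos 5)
Your proof is correct and is essentially the paper's own argument: both hinge on $s_2=s_3$ together with the bi-symmetry $h(s_1,s_2,s_3)=h(s_1,s_3,s_2)$ to get $q=r$, then substitute into \eqref{eqn:trilin-cartesian}. The only cosmetic difference is that the paper places the triangle in coordinates with the symmetry axis as the $x$-axis and concludes $y_i=0$, whereas you phrase the same computation coordinate-free, exhibiting $X_i$ as an affine combination of the apex $P_1$ and the base midpoint $M$.
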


\begin{proof} Consider a sideways isosceles triangle with vertices $P_1=(x_1,0)$, $P_2=(-x_2,y_2)$
and $P_3= (-x_2,-y_2)$, i.e., its axis of symmetry is the $x$ axis. Let $X_i$ have trilinears $p:q:r=h(s_1,s_2,s_3):h(s_2,s_3,s_1):h(s_3,s_1,s_2)$. Its cartesian coordinates are given by Equation~\eqref{eqn:trilin-cartesian}. As $s_2=s_3$ and $h$ is symmetric on its last two variables $h(s_1,s_2,s_3)=h(s_1,s_3,s_2)$ it follows from equation \eqref{eqn:trilin-cartesian} that $y_i=0$. 
\end{proof}

\begin{lemma*}[\ref{lem:axisymmetric}]
If the locus of triangle center $X_i$ is elliptic, said ellipse must be concentric and axis-aligned with the EB.
\end{lemma*}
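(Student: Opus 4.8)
The plan is to leverage Lemma~\ref{lem:axisymmetric}, which already guarantees that the locus of $X_i$ is invariant under reflection about each of the two EB axes and is centered at the origin $O$. Choosing coordinates so that these axes are the $x$- and $y$-axes, let $R_x$ and $R_y$ denote the reflections about them. Assuming, per the hypothesis, that the locus is an ellipse $\mathcal{E}$, the whole argument rests on two elementary facts about conics: an ellipse possesses a unique center of central (point) symmetry, and a non-circular ellipse admits reflective symmetry only about its two principal axes.

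First I would establish concentricity. Since $\mathcal{E}$ is invariant under both $R_x$ and $R_y$, it is invariant under their composition $R_x \circ R_y$, which — because the two EB axes are perpendicular and meet at $O$ — is precisely the point reflection (the $180^\circ$ rotation) through $O$. As an ellipse has a unique center of central symmetry, namely its own center, the center of $\mathcal{E}$ must coincide with $O$. This yields concentricity outright, and it is the cleanest part of the argument.

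Next I would establish axis-alignment. If $\mathcal{E}$ is a genuine (non-circular) ellipse, then $R_x$ is a nontrivial reflective symmetry of $\mathcal{E}$, and the only lines about which an ellipse is reflection-symmetric are its two principal axes; since the $x$-axis passes through $O$, just shown to be the center of $\mathcal{E}$, it must be one of those principal axes, and likewise the $y$-axis. Hence the principal axes of $\mathcal{E}$ are the coordinate axes, i.e. $\mathcal{E}$ is axis-aligned with the EB. In the degenerate case where $\mathcal{E}$ is a circle, axis-alignment is vacuous and concentricity has already been secured.

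The main obstacle — the only step that is not immediate — is the classification of reflective symmetries of an ellipse used above. I would justify it by translating $\mathcal{E}$ to its center (the origin, by the previous step), so that its defining equation reduces to a pure quadratic form $Q(x,y)=1$ with positive-definite symmetric matrix $M$. A reflection fixing the origin preserves $\mathcal{E}$ if and only if it preserves $Q$, which happens if and only if its axis is an eigenvector line of $M$. For distinct eigenvalues — exactly the non-circular case — these eigenlines are the two perpendicular principal axes, completing the argument; equal eigenvalues give the circle, consistent with the degenerate case above.
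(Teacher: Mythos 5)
Your proof is correct and takes essentially the same approach as the paper: the paper's entire proof of this lemma is the single line ``This follows from Lemma~\ref{lem:axisymmetric},'' i.e., it derives the claim from the locus's reflective symmetry about both EB axes exactly as you do. Your write-up simply supplies the details the paper leaves implicit --- concentricity via the composed point reflection through $O$, and axis-alignment via the classification of reflective symmetries of a non-circular ellipse by the eigenlines of its defining quadratic form.
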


\begin{proof}
This follows from Lemma~\ref{lem:axisymmetric}.
\end{proof}

\begin{lemma*}[\ref{lem:center-cover}]
If the locus of $X_i$ is an ellipse, when $P_1(t)$ is at either EB vertex, its non-zero coordinate is equal to the corresponding locus semi-axis length.
\end{lemma*}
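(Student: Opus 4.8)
The plan is to deduce the statement directly from the two structural facts already in hand, namely Lemma~\ref{lem:concentric-axis-aligned} and Lemma~\ref{lem:axis-of-symmetry}; no contact with the explicit (and cumbersome) vertex parametrization is needed. Since the hypothesis is that the locus is an ellipse, Lemma~\ref{lem:concentric-axis-aligned} tells me this ellipse is concentric and axis-aligned with the EB, so it admits the normalized form $(x/a_i)^2+(y/b_i)^2=1$ for some semi-axes $a_i,b_i>0$. The whole argument then rests on a single elementary incidence observation: such an ellipse meets the $x$-axis in exactly the two points $(\pm a_i,0)$ and the $y$-axis in exactly the two points $(0,\pm b_i)$. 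Hence any point lying simultaneously on the locus and on a coordinate axis is forced to be one of these four vertices, and its non-zero coordinate has magnitude equal to the corresponding semi-axis.

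Next I would pin down the two distinguished parameter values. For $t=0$ we have $P_1=(a,0)$ at the right EB vertex, and (by the reflective symmetry of the EB about the $x$-axis, cf.\ Figure~\ref{fig:sideways-upright-orbit}) the orbit is a sideways isosceles triangle whose axis of symmetry is the $x$-axis; for $t=\pi/2$ we have $P_1=(0,b)$ at the top vertex, and the orbit is an upright isosceles triangle whose axis of symmetry is the $y$-axis. In each case Lemma~\ref{lem:axis-of-symmetry} places the Triangle Center on the relevant axis of symmetry, so $X_i(0)$ lies on the $x$-axis and $X_i(\pi/2)$ lies on the $y$-axis. Combining this with the observation above, $X_i(0)$ --- being a locus point on the $x$-axis --- must equal $(\pm a_i,0)$, whence its non-zero coordinate equals $a_i$ in absolute value; symmetrically $X_i(\pi/2)=(0,\pm b_i)$, so its non-zero coordinate equals $b_i$.

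The only point that requires any care --- and the closest thing to an obstacle --- is the tacit step that $X_i(0)$ and $X_i(\pi/2)$ are genuinely points of the locus, i.e.\ that the locus passes through the two isosceles configurations. This is immediate, since the locus is by definition the image of the map $t\mapsto X_i(t)$, so every value, including these two, lies on it. I would also record explicitly that the non-zero coordinate is the $x$-coordinate in the first case and the $y$-coordinate in the second, so that the phrase ``corresponding semi-axis'' is unambiguous. Everything else is just the intersection of a coordinate line with a conic, which is exactly what makes this the practical recipe used to read off $a_i,b_i$ in the symbolic (CAS) verification phase.
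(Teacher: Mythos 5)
Your proof is correct and follows essentially the same route as the paper's: place $P_1$ at an EB vertex so the orbit is isosceles, use Lemma~\ref{lem:axis-of-symmetry} to put $X_i$ on the corresponding coordinate axis, and use Lemma~\ref{lem:concentric-axis-aligned} to identify that intersection point with $(\pm a_i,0)$ or $(0,\pm b_i)$. If anything, your write-up is more explicit than the paper's own proof, which cites only Lemma~\ref{lem:concentric-axis-aligned} and leaves the appeal to Lemma~\ref{lem:axis-of-symmetry} implicit in its mention of the four isosceles configurations.
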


\begin{proof}
The family of 3-periodic orbits contains four isosceles triangles, Figure~\ref{fig:sideways-upright-orbit}. Parametrize $P_1(t)=(a\cos t,b\sin t)$. It follows from Lemma \ref{lem:axisymmetric} that $X_i(0)=(\pm a_i,0)$, $X_i(\pi/2)=(0,\pm b_i),$  $X_i(\pi)=(\mp a_i, 0)$ and $X_i(3\pi/2)=(0,\mp b_i)$, for some $a_i,b_i$. This ends the proof. 
\end{proof}



\section{Review: Early Observations}
\label{app:early}
Figure~\ref{fig:x12345-feuer-combo} (left) shows the elliptic loci of $X_k$, $k=1,...,5$. The latter two are new results.

\begin{figure}
\centering
\includegraphics[width=\linewidth]{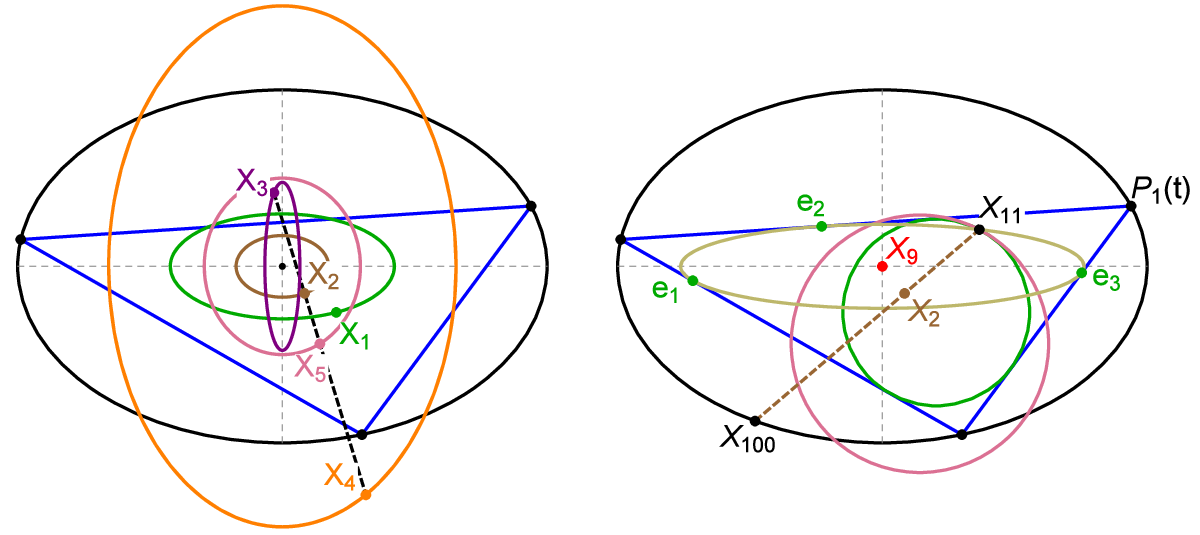}
\caption{\textbf{Left}: The loci of Incenter $X_1$, Barycenter $X_2$, Circumcenter $X_3$, Orthocenter $X_4$, and center of the 9-Point Circle $X_5$ are all ellipses, \href{https://youtu.be/sMcNzcYaqtg}{Video}, \href{https://bit.ly/3opClJq}{app}. Also shown is the {\em Euler Line} (dashed black) which for any triangle, passes through all of $X_i$, $i=1...5$ \cite{mw}. \textbf{Right}: A 3-periodic orbit starting at $P_1(t)$ is shown (blue). The locus of $X_{11}$, where the Incircle (green) and 9-Point Circle (pink) meet, is the caustic (brown), also swept by the Extouchpoints $e_i$. $X_{100}$ (double-length reflection of $X_{11}$ about $X_2$) is the EB \href{https://youtu.be/TXdg7tUl8lc}{Video}, \href{https://bit.ly/2LuARPo}{app}.}
\label{fig:x12345-feuer-combo}
\end{figure}

We have also observed that the locus of the Feuerbach Point $X_{11}$ coincides with the $N=3$ {\em caustic}, a confocal ellipse to which the 3-periodic family is internally tangent, Appendix~\ref{app:billiards}. Additionally, the locus of $X_{100}$, the anticomplement\footnote{Double-length reflection about the Barycenter $X_2$.} of $X_{11}$ is the EB boundary. These phenomena appear in Figure~\ref{fig:x12345-feuer-combo} (right).

Indeed, some these facts will be known to triangle specialists if one regards the EB as the circumellipse centered on the Mittenpunkt $X_9$ \cite[X(9)]{etc} and \cite{dekov14}. For example, a full 57 triangle centers lie on said circumellipse. An animation of some of these points is viewable in \cite[pl\#10]{dsr_playlist_2020}.

\begin{figure}
    \centering
    \includegraphics[width=.85\textwidth]{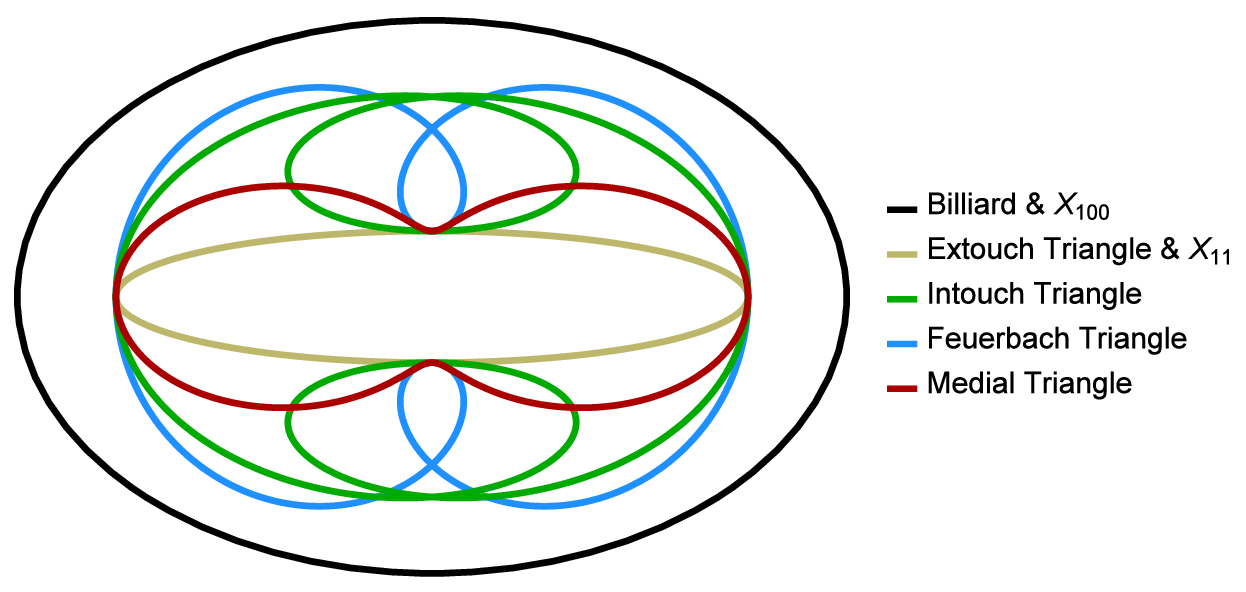}
    \caption{Loci generated by the vertices of selected orbit-derived triangles, namely: the Intouch (green), Feuerbach (blue, not to be confused with the Feuerbach {\em Point} $X_{11}$), and Medial (red) Triangles are non-elliptic. However, those of the Extouch Triangle (brown), are identical to the $N=3$ caustic (a curve also swept by $X_{11}$). Not shown is the locus of the Excentral Triangle, an ellipse similar to a rotated copy of the Incenter locus. \href{https://bit.ly/2MMH9e5}{app},
    \href{https://youtu.be/9xU6T7hQMzs}{Video 1}, \href{https://youtu.be/Xxr1DUo19_w}{Video 2}, \href{https://youtu.be/TXdg7tUl8lc}{Video 3}, \href{https://youtu.be/OGvCQbYqJyI}{Video 4}.}
    \label{fig:non-elliptic-vertex}
\end{figure}

Additionally, a few  observations have been made \cite{reznik2020-intelligencer} about the loci of vertices of orbit-derived triangles (see Appendix~\ref{app:derived-tris}), some of which are elliptic and others non, illustrated in Figure~\ref{fig:non-elliptic-vertex}.

\bibliographystyle{maa} 
\bibliography{authors_rgk,references}

\end{document}